\newtheorem{thm}{Theorem}[section]
\newtheorem*{main_thm}{Theorem~\ref{thm:main}}
\newtheorem*{main_prop}{Proposition~\ref{prop:main}}
\newtheorem{prop}[thm]{Proposition}
\newtheorem{conj}[thm]{Conjecture}
\newtheorem{lemma}[thm]{Lemma}
\newtheorem{defn}[thm]{Definition}
\theoremstyle{remark}
\newtheorem*{rem}{Remark}
\DeclareMathOperator{\ord}{ord}
\DeclareMathOperator{\Res}{Res}
\DeclareMathOperator{\PGL}{PGL}
\newcommand{\pberk}{\mathbf{P}^1_K}
\newcommand{\hberk}{\mathbf{H}^1_K}
\newcommand{\aberk}{\mathbf{A}^1_K}
\newcommand{\zetaG}{\zeta_G}
\newcommand{\GammaF}{\Gamma_{\mathrm{Fix}}}
\newcommand{\GammaFR}{\Gamma_{\mathrm{FR}}}
\newcommand{\A}{\mathbb{A}}
\newcommand{\C}{\mathbb{C}}
\newcommand{\m}{\mathfrak{m}}
\newcommand{\M}{\mathcal{M}}
\renewcommand{\O}{\mathcal{O}}
\renewcommand{\P}{\mathbb{P}}
\newcommand{\R}{\mathbb{R}}
\newcommand{\s}{\mathbf{s}}
\newcommand{\Z}{\mathbb{Z}}
\newcommand\tphi{{\widetilde{\varphi}}}
\newcommand\cO{{\mathcal O}}
\def\AA{{\mathbb A}}
\def\CC{{\mathbb C}}
\def\PP{{\mathbb P}}
\def\ZZ{{\mathbb Z}}
\renewcommand{\labelenumi}{(\Alph{enumi})}
\renewcommand{\tilde}{\widetilde}
\def\Char{{\rm{char}}}
\newcommand{\vv}{\vec{v}}
\begin{document}
\title{Configuration of the Crucial Set for A Quadratic Rational Map}

\author{John R. Doyle}
\address{John R. Doyle\\ 
Department of Mathematics\\
University of Rochester\\
Rochester, New York 16247\\
USA}
\email{john.doyle@rochester.edu}

\author{Kenneth Jacobs}
\address{Kenneth Jacobs\\ 
Department of Mathematics\\
University of Georgia\\
Athens, Georgia 30602\\
USA}
\email{kjacobs2@uga.edu}

\author{Robert Rumely}
\address{Robert Rumely\\ 
Department of Mathematics\\
University of Georgia\\
Athens, Georgia 30602\\
USA}
\email{rr@math.uga.edu}

\date{\today}
\subjclass[2000]{Primary  37P50, 11S82; 
Secondary  37P05, 11Y40} 
\keywords{Crucial set, Quadratic map, Moduli space, Potential good reduction, Stratification} 
\thanks{Research for this paper was supported by NSF VIGRE grant DMS-0738586}

\begin{abstract}
Let $K$ be a complete, algebraically closed non-archimedean valued field, 
and let $\varphi(z) \in K(z)$ have degree two.  We describe the crucial
set of $\varphi$ in terms of the multipliers of $\varphi$ at the classical fixed points,
and use this to show that the crucial set determines a stratification 
of the moduli space $\M_2(K)$ related to the reduction type of $\varphi$.  
We apply this to settle a special case
of a conjecture of Hsia regarding the density of repelling periodic points in the 
non-archimedean Julia set.
\end{abstract}

\maketitle
\section{Introduction}

Let $K$ be an algebraically closed field, complete with respect to a non-Archimediean absolute value $|\cdot|_v$. 
Let $\mathcal{O}= \mathcal{O}_K$ denote its ring of integers and $\mathfrak{m}=\mathfrak{m}_K\subseteq \mathcal{O}$
 its maximal ideal. Let $k = \mathcal{O}/\mathfrak{m}$ denote the residue field. We assume that $|\cdot |_v$ 
and the logarithm $\log_v$ are normalized so that $\ord_{\mathfrak{m}} (x) = -\log_v |x|_v$. 
We will typically drop the dependence on $v$ and $\m$ in the notation and simply write $|\cdot |$ 
and $\ord$. Let $\pberk$ denote the Berkovich projective line over $K$;
it is a compact, uniquely path connected Hausdorff space which contains $\PP^1(K)$ as a dense subset.

Let $\varphi(z) \in K(z)$ be a rational map of degree $d \ge 2$. In \cite{Ru2}, Rumely found 
a canonical way to assign non-negative integer weights $w_{\varphi}(P)$ to points in $\pberk$, 
for which the sum of the weights is $d-1$. 
The set of points which receive weight is called the \emph{crucial set} of $\varphi$, and the probability measure
$\nu_{\varphi} := \frac{1}{d - 1} \sum_{P \in \, \pberk} w_{\varphi}(P) \delta_P(\cdot)$ 
is called the \emph{crucial measure} of $\varphi$.  When $\varphi$ has potential good reduction, the crucial set 
consists of the single point where $\varphi$ has good reduction.  Otherwise, the crucial set appears to classify the 
type of bad reduction that $\varphi$ has.  This paper provides quantitative support for that idea.

\smallskip

Recall that the points of $\pberk$ are 
said to be of types I -- IV.  Type I points are the ``classical'' points belonging to $\PP^1(K)$.
Type II points are the ``non-classical'' points $P$ where $\varphi$ has a meaningful reduction $\tphi_P \in k(z)$.
The crucial set is contained in the set of type II points; in particular, it lies in $\pberk \backslash \PP^1(K)$. 

There are four dynamical conditions, which we denote (W1) through (W4), 
under which a type II point $P \in \pberk$ carries weight (see Section~\ref{sect:notation} for formulas for the weights, 
and definitions of the terms below).  
Letting $\GammaF$ denote the tree in $\pberk$ spanned by the classical fixed points of $\varphi$, they are:

\renewcommand{\labelenumi}{(W\arabic{enumi})}
	\begin{enumerate}
		\item $P$ is a `Berkovich repelling fixed point' of $\varphi$;
		\item $P$ is a `Berkovich multiplicatively indifferent fixed point' of $\varphi$ \\
                         which is a branch point of $\GammaF$; 
		\item $P$ is a `Berkovich additively indifferent fixed point' of $\varphi$ belonging to $\GammaF$;
		\item $P$ is a branch point of $\GammaF$ which is moved by $\varphi$. 
	\end{enumerate}
	\renewcommand{\labelenumi}{(\Alph{enumi})}

\noindent{Here} we have added the word `Berkovich' for emphasis;  usually it will be omitted.
For a quadratic function, $d-1 = 1$, 
so there is a unique point $\xi$ which receives weight.  
It turns out that this is the Minimal Resultant Locus (see \cite{Ru1}), 
the point where $\varphi$ has ``best possible'' reduction.  
According as (W1) through (W4) holds at $\xi$, 
we will say that a quadratic map $\varphi$ has potential good reduction, potential multiplicative reduction,  
potential additive reduction, or potential constant reduction.

\smallskip  
In this paper, we first determine for which quadratic maps $\varphi$
the point $\xi$ satisfies (W1), (W2), (W3), or (W4); 
this is accomplished in Propositions \ref{prop:ramifiedfixedpt} and \ref{prop:main_prop}, 
where we show that the reduction type of $\varphi$ at $\xi$  
is determined by the multipliers at the classical fixed points. 

We apply this to study the image of $\varphi$ in $\M_2$, 
the moduli space of degree $2$ rational maps.  Using geometric invariant theory, 
Silverman \cite{silverman:1998} constructed $\M_d$ as a scheme over $\ZZ$ for all $d\geq 2$,
and for $d=2$ showed there is a canonical isomorphism $\s: \M_2 \to \A^2$. 
(Milnor had shown this earlier over $\C$; see \cite[Lemma 3.1]{milnor:1993}.) 
This leads to a natural compactification of $\M_2$ as $\overline{\M}_2 \cong \mathbb{P}^2$.  

For a quadratic map $\varphi(z) \in K(z)$, let $[\varphi] \in \M_2(K)$ denote the point corresponding to $\varphi$. 
We will base-change to $\cO$ and regard $\M_2$ and $\overline{\M}_2$ as schemes over $\cO$. 
The isomorphism $\s : \M_2 \to \A^2$ is given by $\s([\varphi]) = (\sigma_1(\varphi),\sigma_2(\varphi))$ where $\sigma_1, \sigma_2$
are the first and second symmetric functions in the multipliers at the fixed points of $\varphi$. 
We identify $\A^2(K)$ with  $\{\,[x:y:1] \} \subset \PP^2(K)$.
Given a point $P \in \P^2(K)$, we  write $\tilde{P} \in \P^2(k)$ for 
the specialization of $P$ modulo $\m$. 

For arbitrary $d \ge 2$, the connection between the crucial set and $\M_d$ was first noted in \cite{Ru2},
where it was shown that points in the barycenter of $\nu_\varphi$ correspond to conjugates of $\varphi$
having semi-stable reduction in the sense of geometric invariant theory. 
The main result of this paper is the following theorem, 
which says that for quadratic functions, 
the crucial set determines a stratification of $\M_2(K)$ compatible with 
specialization of $[\varphi]$ to $\overline{\M}_2(k)$:

\begin{thm}\label{thm:main}
Let $K$ be a complete, algebraically closed non-Archimedean field.  
Let $\varphi$ be a degree two rational map over $K$, and let $\xi$ denote the unique point in the crucial set of $\varphi$.
Then
	\begin{enumerate}
		\item $\tilde{\s([\varphi])} \in \A^2(k)$ if and only if $\xi$ satisfies $(W1)$ 
             $(\varphi$ has potential good reduction$)$.
		\item $\tilde{\s([\varphi])} = [\tilde{1} : \tilde{x} : \tilde{0}]$ 
                    for some $\tilde{x} \in k$ with $\tilde{x} \ne \tilde{2}$ 
                   if and only if $\xi$ satisfies $(W2)$ $(\varphi$ has potential multiplicative reduction$)$. 
                    In this case, $\tilde{x} = \tilde{\lambda} + \tilde{\lambda}^{-1}$, 
                    where $\lambda$ is the multiplier of $\varphi$ at a classical indifferent fixed point.
		\item $\tilde{\s([\varphi])} = [\tilde{1} : \tilde{2} :\tilde{0}]$ if and only if $\xi$ satisfies $(W3)$
                    $(\varphi$ has potential additive reduction$)$.
		\item $\tilde{\s([\varphi])} = [\tilde{0} : \tilde{1} : \tilde{0}]$ if and only if $\xi$ satisfies $(W4)$
                    $(\varphi$ has potential constant reduction$)$.
	\end{enumerate}
\end{thm}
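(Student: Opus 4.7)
The plan is to combine the characterizations of (W1)--(W4) at $\xi$ given by Propositions~\ref{prop:ramifiedfixedpt} and~\ref{prop:main_prop}, which describe when each condition holds in terms of the multipliers $\lambda_1,\lambda_2,\lambda_3$ of $\varphi$ at its three classical fixed points, with the defining formulas $\sigma_1 = \sum_i \lambda_i$, $\sigma_2 = \sum_{i<j}\lambda_i\lambda_j$ and the holomorphic-index identity $\lambda_1\lambda_2\lambda_3 = \sigma_1 - 2$ (which cuts the relation $\sigma_3 = \sigma_1 - 2$ on $\M_2$). For each case I translate the multiplier profile furnished by the propositions into valuation and residue data for $(\sigma_1, \sigma_2)$, from which the specialization class of $[\sigma_1 : \sigma_2 : 1]$ in $\PP^2(k)$ is read off; the converse direction uses the same dictionary in reverse.

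The uniform tool is Newton polygon analysis of the monic polynomial $T^3 - \sigma_1 T^2 + \sigma_2 T - \sigma_3$, whose slopes recover the $\ord(\lambda_i)$. Because $\sigma_3 = \sigma_1 - 2$ forces $\ord(\sigma_3)$ to be controlled by $\ord(\sigma_1)$, the polygon depends only on $\ord(\sigma_1)$ and $\ord(\sigma_2)$, while the specialization of $[\sigma_1:\sigma_2:1]$ in $\PP^2(k)$ depends only on which of $|\sigma_1|, |\sigma_2|, 1$ is largest, with ties broken by residues of ratios. These two dictionaries match up into three regimes. In the regime $|\sigma_1|, |\sigma_2| \le 1$, all $|\lambda_i| \le 1$, which matches (W1) and specializes into $\A^2(k)$, yielding (A). In the regime $|\sigma_1| > 1$ with $|\sigma_1| \ge |\sigma_2|$, the Newton polygon forces two bounded multipliers with $\tilde\lambda_1\tilde\lambda_2 = \tilde 1$ and one large multiplier of valuation $-\ord(\sigma_1)$; a direct dominant-term computation yields $\sigma_2/\sigma_1 \to \tilde\lambda + \tilde\lambda^{-1}$, and this splits into (B) or (C) according as the residue differs from or equals $\tilde 2$, matching the $\tilde\lambda \ne \tilde 1$ versus $\tilde\lambda = \tilde 1$ distinction between (W2) and (W3). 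In the regime $|\sigma_2| > \max(|\sigma_1|, 1)$, the Newton polygon forces one small and two large multipliers, matching (W4) and giving specialization $[\tilde 0 : \tilde 1 : \tilde 0]$, yielding (D).

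The main technical obstacle is verifying that the multiplier profile supplied by each proposition matches precisely one Newton polygon regime, and that the boundary cases are consistent across characteristics. In characteristic $2$, for example, $\tilde 2 = \tilde 0$, so the specialization class $[\tilde 1 : \tilde 0 : \tilde 0]$ sits in (C) rather than (B); this is reconciled by observing that $\tilde\lambda + \tilde\lambda^{-1} = \tilde 2$ is equivalent to $(\tilde\lambda - \tilde 1)^2 = \tilde 0$ and hence to $\tilde\lambda = \tilde 1$ uniformly in every characteristic, so the split between (W2) and (W3) remains clean. Once this bookkeeping is in place, the converse implications in each case come from reading the Newton polygon backward: the specialization class of $[\sigma_1:\sigma_2:1]$ pins down the valuation hierarchy of $(\sigma_1,\sigma_2)$, which determines the Newton polygon, which determines the valuation profile of the $\lambda_i$, which by the prior propositions determines which of (W1)--(W4) holds at $\xi$.
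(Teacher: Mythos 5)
Your proposal is correct, and it arrives at the same result by a genuinely different route. The paper's proof is organized in the direction $(W1)$--$(W4) \Rightarrow$ specialization class: it assumes each condition at $\xi$, invokes Propositions~\ref{prop:ramifiedfixedpt} and~\ref{prop:main_prop} to pin down the multiplier valuations and residues, and then computes $\widetilde{\s([\varphi])}$ directly (e.g.\ for (W2) by reducing $\sigma_1/\lambda_1$ and $\sigma_2/\lambda_1$). The converse implications then come for free from the observation that both the four conditions and the four target subsets of $\PP^2(k)$ form partitions. You instead run the dictionary from the specialization side: you use the identity $\sigma_3 = \sigma_1 - 2$ to eliminate $\sigma_3$, read the Newton polygon of $T^3 - \sigma_1 T^2 + \sigma_2 T - \sigma_3$ to recover the multiplier valuations from $\ord(\sigma_1), \ord(\sigma_2)$, split into three valuation regimes (which are exactly the preimages of $\A^2(k)$, the affine line at infinity, and the point $[\tilde 0:\tilde 1:\tilde 0]$), and finally use Propositions~\ref{prop:ramifiedfixedpt}/\ref{prop:main_prop} and Lemma~\ref{lem:repelling} to match each regime, refined by the residue of $\sigma_2/\sigma_1$, to the appropriate $(W\!*)$. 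What your approach buys is a cleaner structural picture --- the regimes are intrinsic to the point $\s([\varphi]) \in \AA^2$, and the Newton polygon explains mechanically why the valuation hierarchy of the $\lambda_i$ is what it is --- whereas the paper's computations are more hands-on but require less machinery. Both rely essentially on the same Propositions, and your handling of the characteristic-$2$ boundary case (via $(\tilde\lambda - \tilde 1)^2 = \tilde 0 \Leftrightarrow \tilde\lambda + \tilde\lambda^{-1} = \tilde 2$) is exactly the right reconciliation. One small imprecision: the claim that the Newton polygon ``depends only on $\ord(\sigma_1)$ and $\ord(\sigma_2)$'' is not literally true in the regime $|\sigma_1|, |\sigma_2| \le 1$, where $\ord(\sigma_3) = \ord(\sigma_1 - 2)$ can still vary (e.g.\ when $\tilde\sigma_1 = \tilde 2$), but this has no effect on the argument since there one only needs the qualitative conclusion that all $|\lambda_i| \le 1$.
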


The fact that $\tilde{\s([\varphi])} \in \A^2(k)$ if and only if $\varphi$ has potential good reduction 
had previously been shown by D. Yap in her thesis \cite[Thm. 3.0.3]{yap:2012}. 
Our theorem may be considered a strengthening of Yap's result.  One also notes the parallel between 
Theorem \ref{thm:main} and Milnor's description \cite{milnor:1993} of degenerations of quadratic 
maps over $\CC$, as they approach the boundary of moduli space. 

In \textsection \ref{sec:hsia}, we observe that Theorem~\ref{thm:main} implies the following result, 
which resolves a special case of a conjecture of L.-C. Hsia (\cite[Conj. 4.3]{hsia:2000}).

\begin{prop}\label{prop:main}
Let $\varphi$ be a quadratic rational map defined over $K$, let $\mathcal{J}_\varphi(K) \subseteq \P^1(K)$ 
be the $($classical$)$ Julia set of $\varphi$, and let $\overline{\mathcal{R}_\varphi(K)}$ be the closure 
in $\PP^1(K)$ of the set of type {\rm{I}} repelling periodic points for $\varphi$. 
Then $\mathcal{J}_\varphi(K) = \overline{\mathcal{R}_\varphi(K)}$.
\end{prop}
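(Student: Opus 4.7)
The plan is to use Theorem~\ref{thm:main} to split into the four reduction types of $\varphi$ and verify the asserted density case by case. Since both $\mathcal{J}_\varphi(K)$ and $\overline{\mathcal{R}_\varphi(K)}$ are $\PGL_2(K)$-conjugation invariants, I would first conjugate so that the crucial point $\xi$ equals the Gauss point $\zetaG$. Note also that the inclusion $\overline{\mathcal{R}_\varphi(K)} \subseteq \mathcal{J}_\varphi(K)$ is standard, so in every case the task reduces to producing enough type I repelling periodic points.

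In the potential good reduction case (W1), a classical result of Benedetto and Rivera-Letelier shows $\mathcal{J}_\varphi(K) = \emptyset$ and that no classical point is repelling periodic, so both sides are empty. In the potential multiplicative case (W2), after further conjugation the two classical fixed points may be taken to be $0$ and $\infty$ with multipliers $\lambda, \lambda^{-1}$ of absolute value $\ne 1$; the classical Julia set is then a Cantor set on which $\varphi$ acts as a full shift on two symbols, and density of repelling periodic points follows from standard inverse-branch (Schwarzian/Koebe-style) arguments in non-archimedean hyperbolic dynamics. The potential constant case (W4) admits a parallel hyperbolic structure, the model example being $\varphi(z) = z^2 + c$ with $|c|>1$: here the filled Julia set is a Cantor set on which $\varphi$ is topologically conjugate to the one-sided shift, so density of repelling periodic points is standard, and the general (W4) map reduces to this model by conjugation together with the explicit multiplier formula extracted from Theorem~\ref{thm:main}.

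The main obstacle will be the potential additive case (W3), where all three classical multipliers are congruent to $1 \pmod{\m}$ and the fixed-point expansion used in (W2) and (W4) is unavailable. My plan is to pass to a sufficiently large iterate $\varphi^n$ and argue that $\varphi^n$ acquires a classical periodic point with non-unit multiplier reduction; this is plausible because the parabolic behavior at the unique indifferent classical fixed point need not persist along longer orbits, and a branch-tracking argument on $\GammaF$ combined with Theorem~\ref{thm:main} applied to $\varphi^n$ should force one of the hyperbolic cases (W2) or (W4) to occur for $\varphi^n$. Because $\mathcal{J}_{\varphi^n}(K) = \mathcal{J}_\varphi(K)$ and $\mathcal{R}_{\varphi^n}(K) \subseteq \mathcal{R}_\varphi(K)$, density for $\varphi$ follows from density for $\varphi^n$. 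The delicate point will be verifying that Theorem~\ref{thm:main} applied to $\varphi^n$ does not land back in case (W1) or (W3); this should be ruled out by observing that the crucial point of $\varphi^n$ must still equal $\xi$ and that the multiplier of $\varphi^n$ at the indifferent fixed point of $\varphi$ cannot be congruent to $1$ while simultaneously the $n$-fold iterate dynamics contract everywhere.
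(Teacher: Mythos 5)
Your proposal takes a genuinely different route from the paper, but it contains a factual error that undermines the key case, and it also misses the much shorter argument that Theorem~\ref{thm:main} actually enables. The paper's proof is essentially two lines: in the potential good reduction case both sides are empty (the Berkovich Julia set is a single point of $\hberk$ by \cite{FRL}); in every bad reduction case, Theorem~\ref{thm:main} implies $\tilde{\s([\varphi])} \notin \A^2(k)$, so some $|\sigma_i| > 1$, hence some classical fixed point multiplier satisfies $|\lambda_i| > 1$, i.e.\ $\varphi$ has a \emph{classical} repelling fixed point --- and B\'ezevin's theorem (\cite{Bez}, Th\'eor\`eme~3) states precisely that the existence of one type~I repelling periodic point forces $\mathcal{J}_\varphi(K) = \overline{\mathcal{R}_\varphi(K)}$. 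That one appeal handles (W2), (W3), and (W4) simultaneously, with no need to model the dynamics case by case.

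Your treatment of (W3) is the most serious problem. You assert that in the potential additive case ``all three classical multipliers are congruent to $1 \pmod{\m}$'' and that ``the fixed-point expansion used in (W2) and (W4) is unavailable.'' This is incorrect: by Propositions~\ref{prop:ramifiedfixedpt}(B) and~\ref{prop:main_prop}(B)(ii), every (W3) map has a classical repelling fixed point with $|\lambda_1| > 1$, so $\tilde{\lambda}_1$ is not even defined in $k$, let alone equal to $\tilde{1}$. You appear to be conflating case (W3) with Proposition~\ref{prop:main_prop}(A)(ii), where $\tilde{\lambda}_1 = \tilde{\lambda}_2 = \tilde{\lambda}_3 = \tilde{1}$ --- but that is a potential \emph{good} reduction case, and the map there has no classical repelling fixed points at all. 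Because of this confusion, the entire iterate-passing strategy you propose for (W3) is aimed at a nonexistent difficulty. Worse, the strategy itself cannot work as stated: Theorem~\ref{thm:main} applies only to quadratic maps, whereas $\varphi^n$ has degree $2^n$; the crucial set of $\varphi^n$ carries total weight $2^n - 1$ and is therefore not a single point, so ``the crucial point of $\varphi^n$ must still equal $\xi$'' is not a meaningful assertion. Finally, even in (W2) and (W4) your argument invokes ``standard inverse-branch (Schwarzian/Koebe-style) arguments'' and a topological conjugacy to a one-sided shift without supplying them; over a non-archimedean field these would need to be developed or cited precisely, and they are exactly what B\'ezevin's theorem packages for you.
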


\subsection{Outline of the Paper}
In Section~\ref{sect:notation}, we introduce notation and concepts used in the rest of the article. 
In particular, we give a more detailed explanation of the weights $w_\varphi$ and the conditions (W1)---(W4) 
under which a point can have weight. In Section~\ref{sect:structures} we relate the reduction type of the 
unique weighted point $\xi$ to the multipliers at the classical fixed points. 
For this, we rely on two normal forms for quadratic rational maps given in \cite{silverman:2012}. 
In Section~\ref{sect:modulispace} we apply our analysis to prove Theorem \ref{thm:main}, 
 and give the application to Hsia's conjecture.

\subsection{Acknowledgements}
The research for this article was begun during an NSF-sponsored 
VIGRE research seminar on dynamics on the Berkovich line at the University of Georgia, led by the third author. We would like to thank the other members of our research group for many helpful discussions, and would especially like to thank Jacob Hicks, Allan Lacy, Marko Milosevic, and Lori Watson for their insights and contributions to this work.

\section{Notation and Conventions}\label{sect:notation}
In this section we introduce terminology and notation used throughout the paper.

\subsection{Berkovich Space}
Formally, the Berkovich \emph{affine} line $\aberk$ over $K$ is the collection of equivalence classes of multiplicative seminorms on $K[T]$ 
which extend the norm $|\cdot|_v$ on $K$. Berkovich showed (\cite{Ber}, p.18) that each such seminorm $[\cdot]_x$ corresponds to a decreasing 
sequence of discs $\{D(a_i, r_i)\}$ in $K$ (more precisely, to a cofinal equivalence class of such sequences) via the correspondence 
$$[f]_x := \lim_{i\to\infty} [f]_{D(a_i, r_i)}\ .$$ 
Here, $[f]_{D(a,r)} = \sup_{z\in D(a,r)} |f(z)|$ is the sup-norm on the disc $D(a,r)$. With this,
we obtain a classification of the points of $\aberk$ into four types:

\begin{itemize}
\item \emph{Points of type I} correspond to nested, decreasing sequences of discs $\{D(a_i, r_i)\}$ whose intersection is a single point in $K$; formally, these are the seminorms $[f]_{a} = |f(a)|$ for $a\in K$.
\item \emph{Points of type II} correspond to nested, decreasing sequences of discs whose intersection is a disc $D(a,r)\subseteq K$ with $r\in |K^\times|$. In this case, we have $[f]_x = \sup_{z\in D(a,r)} |f(z)|$.  Note that for polynomials $f\in K[T]$, the supremum is achieved at some point in $D(a,r)$. 
\item \emph{Points of type III} correspond to nested, decreasing sequences of discs whose intersection is a disc $D(a,r) \subseteq K$ with $r\not\in |K^\times|$; as in the case of type II points, the corresponding seminorm is the sup-norm on $D(a,r)$. In this case, the $\sup$ is not achieved unless $f$ is constant.
\item \emph{Points of type IV} correspond to nested, decreasing sequences of discs $\{D(a_i, r_i)\}$ whose intersection is empty, but for which $\lim_{i\to\infty} r_i >0$. Such points can occur only if the field $K$ is not spherically complete.
\end{itemize}

One often writes type I, II, and III points in terms of their corresponding discs $D(a,r)$ using the shorthand
$\zeta_{D(a,r)}$ or simply $\zeta_{a,r}$. The point corresponding to the unit disc is called the Gauss point 
(it corresponds to the Gauss norm on polynomials), and is written $\zetaG = \zeta_{D(0,1)}$. 

The construction of $\pberk$ from $\aberk$ is similar to the construction of $\mathbb{P}^1$ from $\mathbb{A}^1$, 
gluing two copies of $\aberk$ together by means of an involution of $\aberk \setminus \{0\}$; 
see Section 2.2 of \cite{BR} for details.  We write $\hberk$ for $\pberk \backslash \PP^1(K)$, 
the `non-classical' part of $\pberk$.

The Berkovich Line is typically endowed with the Berkovich-Gel'fand topology, which is the weakest topology 
for which the map $x\mapsto [f]_x$ is continuous for every $f\in K[T]$. In this topology, $\pberk$ is a compact Hausdorff space and is uniquely path connected. The points of type I, II and III are dense in $\pberk$ for this topology. In general the Berkovich-Gel'fand topology is not metrizable. 

A rational map $\varphi\in K(z)$ induces a continuous action on $\mathbb{P}^1(K)$ by means of a lift $\Phi = [F,G]$, 
where $F,G\in K[X,Y]$ are homogeneous polynomials of degree $d = \deg(\varphi)$ 
such that $\varphi(z) = \frac{F(z,1)}{G(z,1)}$. 
This action extends continuously to all of $\pberk$, and preserves types of points.
 One can show that $\PGL_2(K)$ acts transitively on type II points, and that $\PGL_2(\mathcal{O})$ is the stabilizer of the Gauss point.

\subsubsection{Tree Structure}
The Berkovich projective line can also be viewed as a tree. The collection of points $\{\zeta_{a,r}\}_{r\in [t,s]}\subseteq \pberk$ 
is naturally homeomorphic to the real segment $[t,s]$. The type II points are dense along such a segment, and at any type II point 
there are infinitely many branches away from $\{\zeta_{a,r}\}_{r\in [t,s]}$;  
indeed the branches are in $1-1$ correspondence with the elements of $\PP^1(k)$ for the residue field $k$.  
To make this clearer, consider the type II point $\zetaG = \zeta_{D(0,1)}$. 
The branches off $\zetaG$ come from equivalence classes of paths $[\zetaG,x]$ sharing a common initial segment; 
these classes correspond to subdiscs  $D(b,1)^- = \{x \in K : |x-b| < 1\}$ where $|b| \le 1$, and to the set $\PP^1(K) \backslash D(0,1)$.
Identifying $D(0,1)$ with the valuation ring $\cO_K$, the subdiscs $D(b,1)^- $ are just the cosets 
$b + \mathfrak{m}$ in $k = \cO_K/\mathfrak{m}$, and $\PP^1(K) \backslash D(0,1)$ corresponds to $\infty \in \PP^1(k)$.  

A more geometric way to think of the branches is in terms of tangent directions. Formally, a tangent 
direction $\vv$ at $P$ is an equivalence class of paths emanating from $P$. The collection of tangent directions at $P$ will be 
denoted by $T_P$. For points of type II, $T_P$ is in $1-1$ correspondence with $\mathbb{P}^1(k)$ as noted above. 
For points of type III, $T_P$ consists of two directions, 
while for points of type I and IV, $T_P$ consists of the unique 
direction pointing into $\pberk$.
If $P, Q$ are points of $\pberk$ with $\varphi(P) = Q$, there is 
a canonical induced surjective map $\varphi_* : T_P \rightarrow T_Q$.

\subsubsection{Reduction of Rational Maps}  
The action of $\varphi$ on the tangent space $T_P$ is closely related to the notion of the reduction of $\varphi$,  
which we describe here. If $[F,G]$ is a lift of $\varphi$ that has been scaled so that the coefficients all lie 
in $\mathcal{O}$, and so that at least one is a unit, we call $[F,G]$ a {\em normalized lift},
or {\em normalized representation}, of $\varphi$. 
Such a representation is unique up to scaling by a unit in $\cO$. We can reduce each coefficient of a normalized 
lift $[F, G]$ modulo $\mathfrak{m}$.  After removing common factors, we obtain a well-defined map 
$[\tilde{F}: \tilde{G}]$ on $\mathbb{P}^1(k)$. This map, called the reduction of $\varphi$ at $\zeta_G$, 
is denoted $\tilde{\varphi}$.  

If $P$ is an arbitrary type II point, there is a $\gamma \in \PGL_2(K)$ for which $\gamma(\zeta_G) = P$;  we define the 
reduction of $\varphi$ at $P$ to be the reduction of the conjugate $\varphi^\gamma = \gamma^{-1} \circ \varphi \circ \gamma$: 
\begin{equation*}
\tilde{\varphi}_P(z) \ := \ \tilde{\varphi^{\gamma}}(z) \ .
\end{equation*} 
The reduction $\tilde{\varphi}_P$ is unique up to conjugation by an element of $\PGL_2(k)$;  in particular, the degree $\deg(\tilde{\varphi}_P)$
is well-defined.  

It was shown by Rivera-Letelier that a type II point $P\in \hberk$ is fixed by $\varphi$ 
if and only if the reduction $\tilde{\varphi}_P$ is non-constant (see \cite{BR}, Lemma 2.17);  
equivalently, $\varphi(P) \ne P$ if and only if $\tilde{\varphi}_P$ is constant.
Rivera-Letelier calls a type II point $P$ a {\em repelling fixed point} if $\deg(\tilde{\varphi}_P) \ge 2$, 
and he calls $P$ an {\em indifferent fixed point} if  $\deg(\tilde{\varphi}_P) = 1$.
Rumely \cite[Def. 2]{Ru2} gave a refined classification of indifferent fixed points in $\hberk$:

\begin{defn}
If $P$ is a type {\rm II} indifferent fixed point of $\varphi$, 
then after a change of co\"ordinates on $\mathbb{P}^1(k)$, exactly one of the following holds:

\begin{itemize}
\item $\tilde{\varphi}_P(z) = \tilde{c}z$ for some $\tilde{c}\in k^{\times}$, $\tilde{c} \ne \tilde{1}$, 
in which case we say $P$ is a {\rm (Berkovich) multiplicatively indifferent} fixed point for $\varphi$.
\item $\tilde{\varphi}_P(z) = z+\tilde{a}$ for some $\tilde{a}\in k^{\times}$, 
in which case we say $P$ is an {\rm (Berkovich) additively indifferent} fixed point for $\varphi$.
\item $\tilde{\varphi}_P(z) = z$,  in which case we say $P$ is an {\rm id-indifferent} fixed point for $\varphi$.
\end{itemize}
\end{defn}

One should think of each of the above reduction types as describing the behavior of the map $\varphi_*$ acting on $T_P$. 
More precisely, after conjugating $\varphi$ by a suitable $\gamma \in \PGL_2(K)$ we can assume that $P = \zetaG$ is fixed.  
Then $\tilde{\varphi}$ is a well-defined non-constant map, and by making use of the identification 
$T_P \cong \mathbb{P}^1(k)$, if $\vv_a\in T_P$ corresponds to the point $a\in \mathbb{P}^1(k)$, 
then $\varphi_*(\vv_{a}) = \vv_{\tilde{\varphi}(a)}$. 

It is common to say that a map $\varphi(z) \in K(z)$ has {\em good reduction} if $\deg(\tilde{\varphi}) = \deg(\varphi)$,
and {\em potential good reduction} if $\deg(\tilde{\varphi}_P) = \deg(\varphi)$ for some $P \ne \zeta_G$.
However, in this paper we will not distinguish good reduction from potential good reduction.
By the discussion above, a quadratic map $\varphi$ has potential good reduction iff  it has a type II repelling fixed point. 

\subsection{The Crucial Set}\label{sect:classification}

The crucial set was constructed in \cite{Ru2}, and arose from the study of a certain function 
$\ord\Res_\varphi : \pberk \to \R \cup \{\infty\}$. This function had been introduced in \cite{Ru1}, in order 
to address the question of finding conjugates $\varphi^\gamma$ that had minimal resultant.
One obtains the crucial measure and crucial set by taking the graph-theoretic Laplacian of $\ord\Res_\varphi(\cdot)$, 
restricted to a canonical tree $\GammaFR \subset \pberk$.

In this section, we briefly sketch this construction.

\subsubsection{The Function $\ord\Res_{\varphi}(x)$}
Let $\varphi\in K(z)$ have degree $d \geq 2$, and let $[F,G]$ be a lift of $\varphi$.  Writing 
\begin{align*}
F(X,Y) &= a_0 X^d +a_1X^{d-1}Y+ ... + a_d Y^d\\ 
G(X,Y) & = b_0 X^d + b_1 X^{d-1}Y+... +b_dY^d
\end{align*} 
put $\ord(F) = \min_{0 \leq i \leq d} (\ord(a_i))$ and $\ord(G) = \min_{0 \leq i \leq d} (\ord(b_i))$. 
If $\max(|a_i|, |b_i|) = 1$, (that is, $\min(\ord(F),\ord(G)) = 0$), 
the lift is normalized. For $\gamma = 
	\begin{pmatrix}
		A & B\\
		C & D
	\end{pmatrix} \in \PGL_2(K)$, we define
\begin{align*}
	F^{\gamma}(X,Y) &:= DF(AX+BY,CX+DY) - BG(AX+BY,CX+DY),\\
	G^{\gamma}(X,Y) &:=-CF(AX+BY,CX+DY) + AG(AX+BY,CX+DY),
\end{align*}
so that $\Phi^\gamma := [F^\gamma,G^\gamma]$ is a lift of $\varphi^{\gamma}$.

The resultant of the lift $\Phi=[F,G]$ is the determinant of the Sylvester matrix:

\begin{align*}
\Res(\Phi):= \Res(F,G) = & \det\left(
\begin{array}{cccccccc} 
a_0 & a_1 & \dots & a_{d-1} &a_d &0 &  \dots & 0 \\ 
0 & a_0 & a_1 & \dots & a_{d-1} & a_d & \dots & 0 \\
\vdots&& & \ddots & \vdots & \vdots & & \vdots \\
0 & 0 & 0 & a_0 & a_1 & \dots & a_{d-1} & a_d \\
b_0 & b_1 & \dots & b_{d-1} &b_d &0 &  \dots & 0 \\ 
0 & b_0 & b_1 & \dots & b_{d-1} & b_d & \dots & 0 \\
\vdots & & & \ddots & \vdots & \vdots & & \vdots \\
0 & 0 & 0 & b_0 & b_1 & \dots & b_{d-1} & b_d \\
\end{array}\right)\ .
\end{align*}
Let $\zeta\in \pberk$ be a type II point, and choose $\gamma \in \PGL_2(K)$ so that $\zeta = \gamma(\zetaG)$. 
Fix a \emph{normalized} lift $\Phi^\gamma$ of $\varphi^\gamma$. We then define 
\begin{equation*}
\ord\Res_{\varphi}(\zeta) \ := \ \ord(\Res(\Phi^\gamma)) \ .
\end{equation*} 
Using standard formulas for the resultant from \cite{silverman:2007}, 
one sees that $\ord\Res_\varphi(\zeta)$ is well-defined, and that  
\begin{equation*}
\ord\Res_{\varphi}(\zeta) 
\ = \ \ord\Res_\varphi(\zeta_G) + (d^2+d) \ord\big(\det(\gamma)\big) 
                       - 2d \min\big(\ord(F^\gamma), \ord(G^\gamma)\big)\ ,
\end{equation*} 
(The `min' term 
assures we are using a normalized lift $\Phi^\gamma$.) 
It is shown in \cite{Ru1} that the function $\ord\Res_{\varphi}$ 
on type II points extends to a continuous function 
$\ord \Res_\varphi : \pberk \rightarrow [0,\infty]$, which, for any $a \in \PP^1(K)$,   
attains its minimum on the tree $\Gamma_{\mathrm{Fix},\varphi^{-1}(a)}$ 
spanned by the classical fixed points and the pre-images 
under $\varphi$ of $a$.  
It is also shown in \cite{Ru2} that the tree 
$\GammaFR$ spanned by the classical fixed points and the repelling fixed points in $\hberk$
is the intersection of all the trees $\Gamma_{\mathrm{Fix},\varphi^{-1}(a)}$: 
\begin{equation} \label{TreeIntersectionThm}
		\GammaFR \ = \  \bigcap_{a \in \P^1(K)} \Gamma_{\mathrm{Fix},\varphi^{-1}(a)} \ .
\end{equation} 
This is useful in determining $\GammaFR$.

\subsubsection{The Crucial Measures}

The crucial measure is obtained by taking the graph-theoretic Laplacian of $\ord\Res_\varphi(\cdot)$ 
on (a suitable truncation\footnote{In order to apply the theory of graph Laplacians, one must first `prune' the tree $\GammaFR$ to remove its type I endpoints --- see \cite[p. 25]{Ru2}. We omit the details here, as they won't be necessary in this article.} of) the tree $\GammaFR$. More precisely, if $\mu_{Br}$ is the `branching measure' which gives each $P \in \GammaFR$ the weight $1-\frac{1}{2} v(P)$, where $v(P)$ is the valence of $P$ in $\GammaFR$, then

\begin{defn}[{Rumely, \cite[Cor. 6.5]{Ru2}}] The crucial measure associated to $\varphi$ is the  
measure $\nu_\varphi$ on $\GammaFR$ defined by 
\begin{equation*}
\Delta_{\GammaFR}(\ord\Res_{\varphi}(\cdot)) = 2(d^2-d)(\mu_{Br} - \nu_{\varphi}) \ .
\end{equation*} 
It is a probability measure with finite support, and its support is contained in $\hberk$.
\end{defn} 

\noindent{The crucial measure} is canonically attached to $\varphi$, because the function $\ord \Res_\varphi$ 
and the tree $\GammaFR$ are canonical.  It is a conjugation equivariant of $\varphi$ in $\hberk$, 
just as the sets of classical fixed points and critical points are conjugation equivariants in $\PP^1(K)$.  

Rumely gave an explicit expression for $\nu_\varphi$ as a sum of weighted point masses:
\begin{equation*}
\nu_\varphi \ = \ \frac{1}{d-1} \sum_{P \in \, \pberk} w_\varphi(P) \, \delta_P(\cdot) \ , 
\end{equation*}
where the weights $w_\varphi(P)$ are as follows (\cite[Def. 8]{Ru2}):

\begin{defn}\label{def:weight}
For a point $P \in \hberk$, if $P$ fixed by $\varphi$, let $N_{\textrm{shearing}, \varphi}(P)$ 
be the number of directions $\vv\in T_P$ that contain type {\rm I} fixed points 
but are moved by $\varphi_*$. Let $v(P)$ denote the valence of $P$ in $\GammaFR$  
$($set $v(P) = 0$ if $P \not \in \GammaFR)$.
Then the weight $w_{\varphi}(P)$ of a point $P\in \pberk$ is as follows: 
\begin{enumerate}
\item If $P$ is a type {\rm II} fixed point of $\varphi$, 
then $w_{\varphi}(P)= \deg(\widetilde{\varphi}_P) - 1 + N_{\textrm{shearing}, \varphi}(P)$.
\item If $P$ is a branch point of $\Gamma_{\rm{Fix}}$ which is moved by $\varphi$ 
$($necessarily of type {\rm II}$)$, then $w_{\varphi}(P) = v(P)-2$.
\item Otherwise, $w_{\varphi}(P) = 0$.
\end{enumerate}
\end{defn}

\noindent{The above formulas for the weights} give rise to conditions (W1)--(W4)  
in the Introduction. 

The fact that $\nu_\varphi$ is a probability measure is equivalent to the following formula:

\begin{thm}[Rumely \cite{Ru2}, Theorem 6.2]
Let $\varphi\in K(z)$ have degree $d\geq 2$. Then 
\begin{equation} \label{WeightFormula}
\sum_{P\in \pberk} w_{\varphi}(P) \ = \ d-1 \ .
\end{equation}
\end{thm}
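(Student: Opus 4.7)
My plan is to reduce the weight formula to the statement that $\nu_\varphi$ has total mass one, and then to obtain the latter by integrating the defining Laplacian identity over $\GammaFR$. Since the explicit expansion $\nu_\varphi = \frac{1}{d-1}\sum_P w_\varphi(P)\,\delta_P$ is already in hand, pairing both sides with the constant function $1$ gives
$$\sum_{P \in \pberk} w_\varphi(P) \ = \ (d-1)\,|\nu_\varphi|,$$
so the theorem is equivalent to the assertion $|\nu_\varphi| = 1$.

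To establish this, I would integrate both sides of the identity
$$\Delta_{\GammaFR}\bigl(\ord\Res_\varphi\bigr) \ = \ 2(d^2 - d)\bigl(\mu_{Br} - \nu_\varphi\bigr)$$
against the constant function $1$ on $\GammaFR$. The total mass on the left is zero: for a continuous, piecewise-affine function on a finite metrized graph, the graph Laplacian is purely atomic, and the directional-derivative contributions from the two endpoints of each edge cancel. The necessary piecewise-affine structure for $\ord\Res_\varphi$ restricted to $\GammaFR$ is already provided by \cite{Ru1}, after the endpoint truncation mentioned in the footnote. Consequently $|\nu_\varphi| = |\mu_{Br}|$, and what remains is to compute $|\mu_{Br}|$.

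The last step is pure combinatorics. Viewing $\GammaFR$ (truncated if necessary) as a finite tree with vertex set $V$ and edge set $E$, the handshake lemma gives $\sum_{P \in V} v(P) = 2|E|$, while the Euler identity for trees gives $|V| - |E| = 1$. Therefore
$$|\mu_{Br}| \ = \ \sum_{P \in V}\left(1 - \frac{v(P)}{2}\right) \ = \ |V| - |E| \ = \ 1,$$
which completes the chain of implications.

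The main subtlety is the justification of $\int_{\GammaFR}\Delta_{\GammaFR}\bigl(\ord\Res_\varphi\bigr) = 0$. This requires invoking from \cite{Ru1, Ru2} that $\ord\Res_\varphi|_{\GammaFR}$ is continuous and piecewise affine with integer slopes, and that the pruning of type I endpoints does not introduce unwanted boundary contributions to the integral. Once that regularity is in place, the remainder of the argument is a one-line tree calculation.
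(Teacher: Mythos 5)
The paper does not prove this statement; it cites it directly from \cite{Ru2}, Theorem 6.2, so there is no internal proof to compare yours against. That said, your argument is mathematically sound, and the three steps -- reducing the weight formula to $|\nu_\varphi|=1$, integrating the Laplacian identity $\Delta_{\GammaFRcut}(\ord\Res_\varphi)=2(d^2-d)(\mu_{Br}-\nu_\varphi)$ against $1$ to get $|\nu_\varphi|=|\mu_{Br}|$, and evaluating $|\mu_{Br}|=|V|-|E|=1$ by the handshake lemma and Euler's formula on the pruned tree -- all hold; the piecewise-affine regularity of $\ord\Res_\varphi$ on $\GammaFRcut$ required for $\int \Delta_{\GammaFRcut}(\ord\Res_\varphi)=0$ is indeed supplied by \cite{Ru1}.

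One caution worth recording: in \cite{Ru2} the weight formula is Theorem 6.2 while the Laplacian characterization of $\nu_\varphi$ that you lean on is only Corollary 6.5, which appears later. This strongly suggests Rumely's own proof runs in the opposite direction -- establishing $\sum_P w_\varphi(P)=d-1$ first (presumably by a direct case analysis of Definition~\ref{def:weight} on the tree $\GammaFR$, balancing the contributions of fixed points, shearing directions, and moved branch points against the branching structure), and only afterward packaging the result into the statement that $\nu_\varphi$ is a probability measure. Your route is therefore not merely a paraphrase but a genuinely different derivation, and it is cleaner \emph{provided} the Laplacian identity and the explicit expansion $\nu_\varphi=\frac{1}{d-1}\sum_P w_\varphi(P)\delta_P$ are themselves proved in \cite{Ru2} without presupposing the weight formula. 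You should verify that this is the case before presenting your argument as a substitute for Rumely's; if the expansion's normalization constant $\frac{1}{d-1}$ is itself derived from the weight formula, your argument would be circular.
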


\noindent{We emphasize} that for a quadratic rational map, formula (\ref{WeightFormula}) implies there 
is a unique point $\xi\in \pberk$ with $w_\varphi(\xi) >0$; it is the behaviour of $\varphi$ 
at this point that we plan to study.

\subsection{The Moduli Space of Quadratic Rational Maps}

Using geometric invariant theory, Silverman \cite[Thm. 1.1]{silverman:1998} constructed the moduli space $\M_d$ 
for rational maps of degree $d \ge 2$ as a scheme over $\Z$.  He also showed \cite[Thm. 5.1]{silverman:1998} 
there is a natural isomorphism $\s: \M_2 \to \A^2$ as schemes over $\Z$.  More precisely, he showed that the first and second elementary 
symmetric functions $\sigma_1$, $\sigma_2$ of the multipliers at the fixed points give co\"ordinates on $\M_2$. 

This means that if $\varphi(z) \in K(z)$ is a quadratic map with fixed points $\alpha_1, \alpha_2, \alpha_3$ 
(listed with multiplicity)
and corresponding multipliers $\lambda_1, \lambda_2, \lambda_3$, and if we put 
\begin{equation*}
\sigma_1(\varphi) = \lambda_1 + \lambda_2 + \lambda_3 \ , \qquad 
\sigma_2(\varphi) = \lambda_1 \lambda_2 + \lambda_1 \lambda_3 + \lambda_2 \lambda_3 \ ,
\end{equation*}  
then the point $[\varphi]$ in $\M_2(K) \cong \AA^2(K)$  corresponding to $\varphi$ 
is $(\sigma_1(\varphi), \sigma_2(\varphi))$.
 
\section{The Crucial Sets of Quadratic maps}\label{sect:structures}

The behavior of a rational map $\varphi$ near a classical fixed point $\alpha \in \PP^1(K)$ is governed by the multiplier 
at $\alpha$. In this section, we explicitly describe the crucial set for quadratic rational maps in terms of 
the multipliers at the classical fixed points. 

If $\alpha \in K$ is a fixed point for the rational map $\varphi$, 
the derivative $\varphi^{\prime}(\alpha)$ is called the multiplier of $\varphi$ at $\alpha$.  
It is well-known that the multiplier 
is independent of the choice of coordinates, which means the multiplier at $\varphi$ at $\infty \in \PP^1(K)$ 
can be defined by changing coordinates.
  
Letting $\lambda$ be the multiplier at $\alpha$, one says that $\alpha$ is

\begin{center}
	\begin{tabular}{r l}
		\emph{attracting}, & if $|\lambda| < 1$;\\
		\emph{indifferent}, & if $|\lambda| = 1$; and\\
		\emph{repelling}, & if $|\lambda| > 1$.
	\end{tabular}
\end{center}
Throughout this section, we will let $\alpha_1,\alpha_2,\alpha_3$ be the (not necessarily distinct) 
fixed points for $\varphi$, and we will let $\lambda_1,\lambda_2,\lambda_3$ be the corresponding multipliers.

\subsection{Maps with a Multiple Fixed Point} \label{MultipleFixedPtSubsection}
We begin our classification by considering quadratic rational maps $\varphi$ with a multiple fixed point. 
In this case, we may assume without loss of generality that $\alpha_1 = \alpha_2$, 
which means that necessarily $\lambda_1 = \lambda_2 = 1$. 
By \cite[Lem. 2.46]{silverman:2012}, $\varphi$ is conjugate to the rational map
\begin{equation*}
	z \ \mapsto \ z+\sqrt{1-\lambda_3}+\frac{1}{z} \ ,
\end{equation*}
with fixed points $\alpha_1 = \alpha_2 = \infty$ and $\alpha_3 = -\dfrac{1}{\sqrt{1-\lambda_3}}$,  
and multipliers $\lambda_1 = \lambda_2 = 1$ and $\lambda_3$.

\begin{prop}\label{prop:ramifiedfixedpt}
Suppose
	\[ \varphi(z) = z+\sqrt{1-\lambda_3}+\frac{1}{z} \ , \]
and let $\xi$ be the unique point in $\pberk$ with $w_{\varphi}(\xi) = 1$.

\begin{enumerate}
\item If $|\lambda_3| \leq 1$, then $\xi=\zetaG$ satisfies $(W1)$ $(\varphi$ has potential good reduction$)$.

\item If $|\lambda_3| > 1$, then $\xi=\zeta_{D\left(0, \sqrt{|\lambda_3|}\right)}$ satisfies $(W3)$ 
$(\varphi$ has potential additive reduction$)$.
\end{enumerate}

\end{prop}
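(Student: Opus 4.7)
My plan exploits the uniqueness observation stated just after (\ref{WeightFormula}): for a quadratic map there is exactly one $\xi \in \pberk$ with $w_\varphi(\xi) > 0$, so in each case it suffices to exhibit a point satisfying one of (W1)--(W4), and that point must equal $\xi$.

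Write $c = \sqrt{1-\lambda_3}$, so $\varphi(z) = z + c + 1/z$ has the lift $[F,G] = [X^2 + cXY + Y^2,\, XY]$. Since the leading coefficient of $F$ equals $1$, we have $\ord(F) = 0$, so this lift is automatically normalized. In case (1), $|\lambda_3| \le 1$ forces $|c| = |1-\lambda_3|^{1/2} \le 1$, and reducing gives $[\tilde F,\tilde G] = [X^2 + \tilde c\, XY + Y^2,\, XY]$. Neither $X$ nor $Y$ divides $\tilde F$, since $\tilde F(0,Y) = Y^2$ and $\tilde F(X,0) = X^2$, so $\tilde F$ and $\tilde G$ are coprime and $\deg(\tilde\varphi_{\zetaG}) = 2$. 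Therefore $\zetaG$ is a type II repelling fixed point, giving (W1), and uniqueness forces $\xi = \zetaG$.

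For case (2) set $r = |\lambda_3|^{1/2} > 1$, so the ultrametric inequality gives $|c| = |1-\lambda_3|^{1/2} = r$. Choose $t \in K$ with $|t| = r$; the coordinate change $\gamma(z) = tz$ sends $\zetaG$ to $\zeta_{D(0,r)}$, and a direct calculation yields
\begin{equation*}
\varphi^\gamma(z) \;=\; z + \frac{c}{t} + \frac{1}{t^2\, z}.
\end{equation*}
Since $|c/t| = 1$ and $|1/t^2| = 1/|\lambda_3| < 1$, the normalized lift of $\varphi^\gamma$ reduces to $[X(X + \widetilde{c/t}\, Y),\, X\cdot Y]$; cancelling the common factor $X$ leaves $[X + \widetilde{c/t}\, Y,\, Y]$, so $\widetilde{\varphi^\gamma}(z) = z + \widetilde{c/t}$ with $\widetilde{c/t} \ne 0$. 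This identifies $\zeta_{D(0,r)}$ as an additively indifferent fixed point of $\varphi$.

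To finish (2) I still need $\zeta_{D(0,r)} \in \GammaF$. The classical fixed points of $\varphi$ are $\infty$ (double) and $\alpha_3 = -1/c$ with $|\alpha_3| = 1/r < 1 < r$, so the geodesic from $\alpha_3$ to $\infty$ in $\pberk$ passes through $\zeta_{D(\alpha_3,s)} = \zeta_{D(0,s)}$ for every $s \ge |\alpha_3|$, and in particular through $\zeta_{D(0,r)}$. Hence $\zeta_{D(0,r)}$ is an interior point of $\GammaF$ and satisfies (W3). The only step beyond direct calculation is this geometric placement of the candidate point on $\GammaF$; everything else is dictated by the explicit normal form and a single coordinate change.
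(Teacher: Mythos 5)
Your proof is correct and follows essentially the same route as the paper's: treat case (A) by inspecting the reduction at $\zetaG$ directly, and treat case (B) by conjugating by a scaling that moves $\zetaG$ to $\zeta_{D(0,\sqrt{|\lambda_3|})}$, reading off $z \mapsto z + \widetilde{c/t}$ as an additive reduction, and verifying that the point lies in $\GammaF$ (the paper does this via the tree intersection identity \eqref{TreeIntersectionThm}, whereas you read it off directly from the endpoints $-1/c$ and $\infty$ of $\GammaF$ --- either is fine). One small slip worth correcting: you write ``Since the leading coefficient of $F$ equals $1$, we have $\ord(F) = 0$, so this lift is automatically normalized.'' That is only correct when all coefficients lie in $\cO$; in case (B) one has $|c| > 1$, so $\ord(F) = \ord(c) < 0$ and $[X^2 + cXY + Y^2, XY]$ is not normalized. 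This does not affect your argument, since in (B) you immediately pass to the conjugated lift, but the sentence as stated is wrong and should be restricted to the case $|c| \le 1$.
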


\begin{proof}
We immediately see that if $|\lambda_3| \leq 1$ then $|1-\lambda_3| \leq 1 $ 
so $\varphi(z)$ has good reduction at $\zetaG$, proving (A). We therefore suppose that $|\lambda_3| > 1$. Using (\ref{TreeIntersectionThm})
we find that
	\[ \GammaFR \ = \ \GammaF \ = \ \left[ -\frac{1}{\sqrt{1-\lambda_3}}, \infty\right]\ .\]
In particular, since $|\lambda_3| > 1$, we have $\zeta_{D\left(0, \sqrt{|\lambda_3|}\right)} \in \GammaFR$.

Conjugating $\varphi$ by $\gamma(z) = \sqrt{1 - \lambda_3}\cdot z$ we find that 
	\[
		\varphi^{\gamma}(z) \ = \ \dfrac{z^2 + z + \dfrac{1}{1-\lambda_3}}{z} \ .
	\]
Since $|1-\lambda_3| = |\lambda_3| > 1$, reducing modulo $\m$ yields
	\[
		\widetilde{\varphi^\gamma}(z) \ = \ \frac{z^2 + z}{z} \ = \ z + \tilde{1} \ ,
	\]
which shows $\varphi$ satisfies (W3) at the point 
$\xi = \gamma(\zetaG) = \zeta_{D\left(0, |\sqrt{1 - \lambda_3}|\right)} = \zeta_{D\left(0,\sqrt{|\lambda_3|}\right)}$.

\end{proof}

\subsection{Maps With Distinct Fixed Points}
We now turn to quadratic rational maps with three distinct classical fixed points. 
In this case, the multiplier of the third fixed point is determined by the multipliers of the other two:

\begin{lemma}\label{lem:lambda_3}
Let $\varphi$ be a degree two rational map with three distinct classical fixed points. Let $\lambda_1$ and $\lambda_2$ be the multipliers of two of the fixed points. Then the third  has multiplier
	\begin{equation}\label{eq:lambda_3}
	\lambda_3 \ = \ \frac{\lambda_1 + \lambda_2 - 2}{\lambda_1\lambda_2 - 1}.
	\end{equation}
\end{lemma}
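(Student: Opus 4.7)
The plan is to derive the identity from the classical \emph{rational fixed point theorem}, which asserts that for any rational map $\psi$ of degree $d\geq 2$ all of whose fixed points are simple, the multipliers $\lambda_1,\ldots,\lambda_{d+1}$ at those fixed points satisfy
$$\sum_{i=1}^{d+1} \frac{1}{1-\lambda_i} \ = \ 1.$$

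First I would justify applying this formula in the setting of the lemma. The hypothesis that $\varphi$ has three distinct classical fixed points, together with the fact that a degree $2$ rational map has exactly three fixed points counted with multiplicity, forces each of them to be simple. A simple fixed point cannot have multiplier $1$ (if it did, $\varphi(z)-z$ would vanish there to order $\geq 2$, contradicting simplicity), so $\lambda_i \neq 1$ for every $i$ and each summand $1/(1-\lambda_i)$ is defined. The residue identity then reads
$$\frac{1}{1-\lambda_1} \ + \ \frac{1}{1-\lambda_2} \ + \ \frac{1}{1-\lambda_3} \ = \ 1.$$

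Second, I would solve algebraically for $\lambda_3$. Moving the $\lambda_3$ term to the opposite side and combining the two remaining fractions over the common denominator $(1-\lambda_1)(1-\lambda_2)$, the numerator collapses to $\lambda_1\lambda_2 - 1$, so
$$\frac{1}{1-\lambda_3} \ = \ \frac{\lambda_1\lambda_2 - 1}{(1-\lambda_1)(1-\lambda_2)}.$$
Inverting and writing $\lambda_3 = 1 - (1-\lambda_1)(1-\lambda_2)/(\lambda_1\lambda_2 - 1)$, the cross-terms $\pm\lambda_1\lambda_2$ cancel in the numerator of the resulting single fraction, leaving exactly the identity \eqref{eq:lambda_3}.

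If a self-contained derivation avoiding the fixed point theorem is preferred, one can conjugate $\varphi$ by a Möbius transformation placing the fixed points at $0, 1, \infty$. Then $\varphi(z) = (az^2 + bz)/(cz + d)$ with the single constraint $a + b = c + d$ coming from $\varphi(1)=1$, and direct computation yields $\lambda_1 = \varphi'(0) = b/d$, $\lambda_2 = \varphi'(1) = (2a + b - c)/(c + d)$, and $\lambda_3 = c/a$ (the last via the chart $w = 1/z$). Substituting these values into the right-hand side of \eqref{eq:lambda_3} and using the relation $a+b=c+d$, both the numerator $\lambda_1 + \lambda_2 - 2$ and the denominator $\lambda_1\lambda_2 - 1$ share a common factor $(c-a)/[d(c+d)]$, and the ratio simplifies to $c/a = \lambda_3$. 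The only real obstacle is the bookkeeping in this algebraic cancellation; there is no conceptual difficulty beyond invoking (or re-deriving) the residue relation among multipliers at simple fixed points.
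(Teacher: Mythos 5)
Your proof is correct and follows essentially the same path as the paper: invoke the rational fixed point (holomorphic index) formula $\sum_i \frac{1}{1-\lambda_i} = 1$, justify it by observing that three distinct fixed points of a quadratic map are all simple and therefore none has multiplier $1$, and solve for $\lambda_3$. The additional self-contained derivation via the normal form with fixed points at $0,1,\infty$ is also correct but not needed; the paper simply cites the index formula from Silverman's book and solves.
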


\begin{proof}
Since $\varphi$ has three distinct fixed points, none of the multipliers can be equal to one. 
Therefore, we have the well-known formula (see \cite[Theorem 1.14]{silverman:2007})
\begin{equation} \label{FixedPtResidueFormula} 
 \frac{1}{1-\lambda_1} + \frac{1}{1-\lambda_2} + \frac{1}{1-\lambda_3}\ = \ 1 \ . 
\end{equation} 
Solving for $\lambda_3$ yields the desired result.
\end{proof}

\begin{lemma}\label{lem:repelling}
Let $\varphi$ be a degree two rational map over $K$ with three distinct classical fixed points. 
Then these cannot all be repelling.
Moreover, 
	\begin{enumerate}
		\item if $\varphi$ has two classical repelling fixed points, then the third is attracting$;$
		\item if $\varphi$ has only one classical repelling fixed point, then the other two are indifferent$;$
        \item if $\varphi$ has no classical repelling fixed points, then either some pair of multipliers satisfies 
                             $\tilde{\lambda}_i \tilde{\lambda}_j \ne \tilde{1}$, 
                        or else $\tilde{\lambda}_1 = \tilde{\lambda}_2 = \tilde{\lambda}_3 = \tilde{1}$.
	\end{enumerate}
\end{lemma}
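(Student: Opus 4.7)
The engine of the proof is the fixed-point residue identity from Lemma~\ref{lem:lambda_3}, which I rewrite as $\mu_1 + \mu_2 + \mu_3 = 1$ with $\mu_i := 1/(1-\lambda_i)$. Since the three fixed points are distinct, no $\lambda_i$ equals $1$, so each $\mu_i$ is well defined. The plan is to translate the trichotomy ``repelling/attracting/indifferent'' for $\lambda_i$ into ultrametric data for $\mu_i$, and then apply the strong triangle inequality to this identity. Explicitly: if $\lambda_i$ is repelling then $|\mu_i| = 1/|\lambda_i| < 1$; if $\lambda_i$ is attracting then $|\mu_i| = 1$ with $\mu_i \equiv 1 \pmod{\m}$; and if $\lambda_i$ is indifferent then $|\mu_i| \ge 1$, with $\widetilde{\mu}_i = 1/(1-\widetilde{\lambda}_i)$ in the case $\widetilde{\lambda}_i \ne \tilde{1}$.

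First I rule out three repelling fixed points: if $|\mu_i| < 1$ for all $i$ then $|\mu_1 + \mu_2 + \mu_3| < 1$ by the strong triangle inequality, contradicting the identity. For part (A), two repelling multipliers put $\mu_1, \mu_2 \in \m$, so $\mu_3 \equiv 1 \pmod{\m}$, which forces $1 - \lambda_3 \equiv 1 \pmod{\m}$, hence $\widetilde{\lambda}_3 = \tilde{0}$ and $\lambda_3$ is attracting. For part (B), suppose $\lambda_1$ is the unique repelling fixed point. If $\lambda_2$ were attracting, then $\mu_1 \equiv 0$ and $\mu_2 \equiv 1 \pmod{\m}$, so $\mu_3 \equiv 0 \pmod{\m}$; this gives $|\mu_3| < 1$, hence $\lambda_3$ repelling---contradicting the hypothesis. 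The same argument rules out $\lambda_3$ being attracting, so both $\lambda_2$ and $\lambda_3$ must be indifferent.

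For part (C), assume no $\lambda_i$ is repelling and, for contradiction, that every pair satisfies $\widetilde{\lambda}_i \widetilde{\lambda}_j = \tilde{1}$. Then each $\widetilde{\lambda}_i$ is a unit in $k$, so $|\lambda_i| = 1$ and all three are indifferent. The three pairwise equations then force $\widetilde{\lambda}_1 = \widetilde{\lambda}_2 = \widetilde{\lambda}_3 =: \widetilde{\varepsilon}$ with $\widetilde{\varepsilon}^2 = \tilde{1}$. Here I invoke the equivalent form of the residue identity, $\lambda_1 + \lambda_2 + \lambda_3 - \lambda_1 \lambda_2 \lambda_3 = 2$, obtained by clearing denominators in (\ref{FixedPtResidueFormula}). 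Reducing mod~$\m$, substituting $\widetilde{\lambda}_i = \widetilde{\varepsilon}$, and using $\widetilde{\varepsilon}^3 = \widetilde{\varepsilon}$ yields $\tilde{2}\,\widetilde{\varepsilon} = \tilde{2}$. Either $\Char(k) = 2$, in which case $-\tilde{1} = \tilde{1}$ and the conclusion holds automatically, or $\widetilde{\varepsilon} = \tilde{1}$ directly.

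The main obstacle is the bookkeeping in part (B): I must distinguish attracting multipliers (where $\mu_i \equiv 1 \pmod{\m}$) from indifferent multipliers with $\widetilde{\lambda}_i = \tilde{1}$ (where $|\mu_i| > 1$), since these behave very differently despite both satisfying $|\lambda_i| \le 1$. The residue-characteristic $2$ wrinkle in (C) is minor and self-resolves because $\pm\tilde{1}$ coincide in that setting.
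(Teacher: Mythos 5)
Your proof is correct, and it takes a genuinely different route from the paper's. The paper solves the fixed-point residue identity $\sum_i 1/(1-\lambda_i) = 1$ for $\lambda_3$ (its Lemma~\ref{lem:lambda_3}) and estimates $|\lambda_3|$ directly via the formula $\lambda_3 = (\lambda_1+\lambda_2-2)/(\lambda_1\lambda_2-1)$; you instead work symmetrically with $\mu_i := 1/(1-\lambda_i)$ and apply the strong triangle inequality to the additive identity $\mu_1+\mu_2+\mu_3 = 1$. Your translation of the repelling/attracting/indifferent trichotomy into ultrametric conditions on $\mu_i$ is correct: repelling gives $|\mu_i|<1$, attracting gives $\mu_i \equiv 1 \pmod{\m}$, indifferent gives $|\mu_i|\ge 1$, and you rightly flag the subtlety that an indifferent multiplier with $\tilde{\lambda}_i = \tilde{1}$ has $|\mu_i|>1$. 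The bookkeeping for the non-existence of three repelling fixed points, (A), and (B) all checks out. For (C), you clear denominators to get $\lambda_1+\lambda_2+\lambda_3-\lambda_1\lambda_2\lambda_3 = 2$ and reduce, whereas the paper reduces the residue formula directly to $\widetilde{(3/2)}=\tilde{1}$; both yield the same conclusion. Your symmetric treatment of the $\mu_i$ is arguably cleaner for (A) and (B), avoiding any distinguished labeling of $\lambda_3$, while the paper's direct estimate is a bit shorter to write. One small wording quibble: in (C) you announce a proof ``for contradiction,'' but the argument you give is a direct derivation of the second disjunct from the failure of the first, not a contradiction.
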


\begin{proof}
Suppose that $\varphi$ has two repelling fixed points, say $\alpha_1$ and $\alpha_2$,  
with multipliers $\lambda_1$ and $\lambda_2$. 
By \eqref{eq:lambda_3}, the multiplier $\lambda_3$ of $\alpha_3$ satisfies 
	\[ |\lambda_3| \ = \ \frac{|\lambda_1 + \lambda_2 - 2|}{|\lambda_1\lambda_2 - 1|} \ \le \ 
                             \frac{\max\{|\lambda_1|,|\lambda_2|\}}{|\lambda_1\lambda_2|} \ < \ 1\ , \]
so $\alpha_3$ is attracting. This shows that $\varphi$ cannot have three repelling fixed points, 
and also proves (A).

To show (B), suppose the fixed points of $\varphi$ are labeled so that 
	\[ |\lambda_1| \ > \ 1 \ \ge \ |\lambda_2| \ \ge \ |\lambda_3|\ . \]
Again using \eqref{eq:lambda_3}, we have
	\[ |\lambda_3| \ = \ \frac{|\lambda_1 + \lambda_2 - 2|}{|\lambda_1\lambda_2 - 1|} \ \ge \ 
     \frac{|\lambda_1|}{\max\{|\lambda_1\lambda_2|, 1\}} \ \ge \ \frac{|\lambda_1|}{\max\{|\lambda_1|, 1\}} \ = \ 1 \ . \]
Since we assumed that $|\lambda_3| \le 1$, equality holds throughout, and therefore $|\lambda_2| = |\lambda_3| = 1$. 

To show (C), suppose that $|\lambda_1|, |\lambda_2|, |\lambda_3| \le 1$.  If some pair of multipliers
satisfies $\tilde{\lambda}_i \tilde{\lambda}_j \ne \tilde{1}$, we are done.  Otherwise 
$\tilde{\lambda}_1 \tilde{\lambda}_2 = \tilde{\lambda}_1 \tilde{\lambda}_3 
= \tilde{\lambda}_2 \tilde{\lambda}_3 = \tilde{1}$. 
Considering these equalities in pairs, we conclude there is a $\tilde{c} \in k$ such that 
$\tilde{\lambda}_1 = \tilde{\lambda}_2 = \tilde{\lambda}_3 = \tilde{c}$.
In particular, we have $\tilde{c}^2 = \tilde{1}$, so $\tilde{c} \in \{\pm \tilde{1}\}$. If $\tilde{c} = \tilde{1}$ (which is necessarily true if $\Char(k) = 2$), then we are done, so assume that $\Char(k) \ne 2$ and $\tilde{c} = -\tilde{1}$. In this case, reducing \eqref{FixedPtResidueFormula} modulo $\mathfrak{m}$ yields the equation $\tilde{\left(3/2\right)} = \tilde{1}$, which implies that $\tilde{2} = \tilde{3}$, a contradiction. Hence $\tilde{\lambda}_1 = \tilde{\lambda}_2 = \tilde{\lambda}_3 = \tilde{1}$.

\end{proof}

Our next result is parallel to Proposition~\ref{prop:ramifiedfixedpt} and describes the structure of the crucial set for a quadratic rational map with three distinct fixed points. 
For such maps, we know from \cite[Lemma 2.46]{silverman:2012}
that $\varphi$ is conjugate to a map of the form
\begin{equation*}
	z \ \mapsto \ \frac{z^2 + \lambda_1z}{\lambda_2z + 1} \ ,
\end{equation*}
where $\lambda_1$ and $\lambda_2$ are two of the fixed point multipliers for $\varphi$. 
We will henceforth assume $\varphi$ is given in this form. The fixed points of $\varphi$ are then 
$\alpha_1 = 0$, $\alpha_2 = \infty$, and $\alpha_3 = (\lambda_1 - 1)/(\lambda_2 - 1)$,
 with multipliers $\lambda_1$, $\lambda_2$, and $\lambda_3$, respectively. 
This means that $\Gamma_{\textrm{Fix}}$ has a single branch point at $\zeta_{D(0,|\alpha_3|)}$.

Furthermore, if 
$\varphi$ has no repelling classical fixed points, 
i.e., if $|\lambda_1|, |\lambda_2|, |\lambda_3| \le 1$, then by Lemma \ref{lem:repelling} either 
we can conjugate $\varphi$ so that $\tilde{\lambda_1 \lambda_2} \ne 1$, or 
$\tilde{\lambda}_1 = \tilde{\lambda}_2 = \tilde{\lambda}_3 = \tilde{1}$.  
On the other hand, if $\varphi$ has a repelling classical fixed point, then by Lemma \ref{lem:repelling} it 
also has a non-repelling classical fixed point;  hence by conjugating $\varphi$ if necessary,
we can assume that $|\lambda_1| > 1 \ge |\lambda_2|$. 

\begin{prop}\label{prop:main_prop}
Let
	\[ \varphi(z) \ = \ \frac{z^2 + \lambda_1z}{\lambda_2z + 1}, \]
and let $\xi$ be the unique point in $\pberk$ with $w_\varphi(\xi) = 1$.
\begin{enumerate}
\item If $\varphi$ has no repelling classical fixed points, 
           then $\xi$ satisfies {\rm(W1)} $(\varphi$ has potential good reduction$).$ 
            Replacing $\varphi$ by a conjugate if necessary, we can assume that either 
            $\tilde{\lambda_1 \lambda_2} \ne 1$, or that 
            $\tilde{\lambda}_1 = \tilde{\lambda}_2 = \tilde{\lambda}_3 = \tilde{1}$.  In this setting,  
	\begin{enumerate}
		\item if $\tilde{\lambda_1\lambda_2} \ne \tilde{1}$, then $\xi = \zetaG;$
		\item if  $\tilde{\lambda}_1 = \tilde{\lambda}_2 = \tilde{\lambda}_3 = \tilde{1}$, 
            then $\xi = \zeta_{D\left(-1,\sqrt{|\lambda_1\lambda_2 - 1|}\right)}$.
	\end{enumerate}

	\item Suppose $\varphi$ has at least one repelling classical fixed point, 
hence also a non-repelling fixed point by Lemma~$\ref{lem:repelling}$. 
Replacing $\varphi$ by a conjugate if necessary, we can assume that $|\lambda_1| > 1 \ge |\lambda_2|$. 
In this situation $\xi = \zeta_{D(0,|\lambda_1|)}$. Moreover, 
	\begin{enumerate}
		\item if $\tilde{\lambda_2} \not \in \{\tilde{0},\tilde{1}\}$, then $\xi$ satisfies {\rm(W2)} 
             $(\varphi$ has potential multiplicative reduction$);$
		\item if $\tilde{\lambda_2} = \tilde{1}$, then $\xi$ satisfies {\rm(W3)}
             $(\varphi$ has potential additive reduction$);$
		\item if $\tilde{\lambda_2} = \tilde{0}$, then $\xi$ satisfies {\rm(W4)}
		     $(\varphi$ has potential constant reduction$).$ 
	\end{enumerate}
\end{enumerate}
\end{prop}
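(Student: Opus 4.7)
The strategy is uniform across the four subcases: in each, we select $\gamma \in \PGL_2(K)$ with $\gamma(\zetaG) = \xi$, compute a normalized lift of $\varphi^\gamma$, reduce modulo $\m$, and read off the weight type.

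In case (A), the hypothesis $|\lambda_1|, |\lambda_2| \le 1$ ensures $\varphi$ is already normalized at $\zetaG$, and its reduction is $\tilde\varphi(z) = z(z + \tilde\lambda_1)/(\tilde\lambda_2 z + 1)$. This has degree $2$ precisely when $z + \tilde\lambda_1$ and $\tilde\lambda_2 z + 1$ are not proportional, that is when $\widetilde{\lambda_1\lambda_2} \ne \tilde 1$; this settles (A)(i) with $\xi = \zetaG$. When $\tilde\lambda_1 = \tilde\lambda_2 = \tilde\lambda_3 = \tilde 1$, the reduction at $\zetaG$ collapses to the identity, so we conjugate instead by $\gamma(z) = cz - 1$ with $c^2 = \lambda_1\lambda_2 - 1$, sending $\zetaG$ to $\zeta_{D(-1,\sqrt{|\lambda_1\lambda_2 - 1|})}$.

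For case (B), after conjugating so that $|\lambda_1| > 1 \ge |\lambda_2|$, we use $\gamma(z) = \lambda_1 z$, sending $\zetaG$ to $\zeta_{D(0,|\lambda_1|)}$. A direct calculation gives
\[
\varphi^\gamma(z) \ = \ \frac{\lambda_1 z^2 + \lambda_1 z}{\lambda_1\lambda_2 z + 1},
\]
and dividing the corresponding lift by $\lambda_1$ yields $F = X(X+Y)$ and $G = \lambda_2 XY + \lambda_1^{-1}Y^2$, with reductions $\tilde F = X(X+Y)$ and $\tilde G = \tilde\lambda_2 XY$. In (B)(i), cancelling the common factor $X$ gives $\tilde\varphi_\xi(z) = \tilde\lambda_2^{-1}(z+1)$, a M\"obius transformation whose fixed-point multipliers are $\tilde\lambda_2$ and $\tilde\lambda_2^{-1}$, neither of which is $\tilde 0$ or $\tilde 1$; this is multiplicatively indifferent, and since $|\lambda_2 - 1| = 1$ forces $|\alpha_3| = |\lambda_1|$, the point $\xi$ is a branch of $\GammaF$, confirming (W2). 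In (B)(ii), $\tilde G = XY$ yields $\tilde\varphi_\xi(z) = z + \tilde 1$, additively indifferent; here $|\alpha_3| > |\lambda_1|$ but $\xi$ still lies on $[0, \infty] \subseteq \GammaF$, so (W3) holds. In (B)(iii), $\tilde G = 0$, so $\tilde\varphi_\xi$ is constant and $\xi$ is moved by $\varphi$; the equality $|\alpha_3| = |\lambda_1|$ makes $\xi$ a valence-$3$ branch of $\GammaF$, and (W4) holds with $w_\varphi(\xi) = 3 - 2 = 1$.

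The main technical obstacle is the verification in case (A)(ii). After conjugating by $\gamma(z) = cz - 1$, the lift of $\varphi^\gamma$ carries a factor of $\lambda_1\lambda_2 - 1$ in its leading numerator coefficient, so normalization divides by that quantity. Using $\lambda_3 = (\lambda_1+\lambda_2-2)/(\lambda_1\lambda_2-1)$, the reduced numerator becomes $X^2 - \tilde\lambda_3 Y^2 = X^2 - Y^2$, while the reduced denominator is $\tilde\lambda_2 XY + \widetilde{(1-\lambda_2)/c}\,Y^2$. The constraint $\tilde\lambda_3 = \tilde 1$ is equivalent, via the identity $\lambda_3 - 1 = -(1-\lambda_1)(1-\lambda_2)/(\lambda_1\lambda_2-1)$, to $|(1-\lambda_1)(1-\lambda_2)| < |\lambda_1\lambda_2 - 1|$. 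A careful case analysis on the relative sizes of $|1-\lambda_1|$, $|1-\lambda_2|$, and the possibility of cancellation in $(1-\lambda_1) + (1-\lambda_2)$ shows $|1-\lambda_i| < \sqrt{|\lambda_1\lambda_2 - 1|} = |c|$ in all subcases. Consequently $\widetilde{(1-\lambda_2)/c} = 0$, the reduced denominator collapses to $XY$, and $\tilde\varphi^\gamma(z) = (z^2 - 1)/z$ has degree $2$, confirming (W1) at $\xi = \zeta_{D(-1,|c|)}$.
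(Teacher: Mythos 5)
Your proof is correct and follows the paper's strategy closely: in each subcase, conjugate so $\zetaG$ is sent to the candidate point $\xi$, pass to a normalized lift, reduce modulo $\m$, and read off the reduction type (and the position relative to $\GammaF$). The only substantive difference is in case (A)(ii), where the paper simply computes $\Res(\Phi^\gamma) = -1$ (which needs only the weak bound $|\lambda_2-1| \le \sqrt{|\lambda_1\lambda_2-1|}$), whereas you exhibit the reduced map explicitly as $(z^2-1)/z$; this requires the strict inequality $|1-\lambda_i| < \sqrt{|\lambda_1\lambda_2-1|}$, which does follow from $\tilde\lambda_3 = \tilde 1$ via the identity $\lambda_3 - 1 = -(1-\lambda_1)(1-\lambda_2)/(\lambda_1\lambda_2-1)$ together with $(1-\lambda_1)+(1-\lambda_2) = (\lambda_1\lambda_2-1) - (1-\lambda_1)(1-\lambda_2)$, but you should write out the ultrametric case analysis rather than merely asserting it.
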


\begin{rem}
Note that in case (A)(ii), the point $\xi$ is different from $\zetaG$, 
since $|\lambda_1\lambda_2 - 1| < 1$ and therefore
	\[ D\left(-1,\sqrt{|\lambda_1\lambda_2 - 1|}\right) \ \subsetneq \ D(-1,1) \ = \ D(0,1) \ . \]
This is the only situation where $\GammaFR $ may be strictly larger than $\Gamma_{\mathrm{Fix}}$.  
\end{rem}

\begin{proof}
First, assume that $\varphi$ has no classical repelling fixed points, 
which means that each of the multipliers lies in $\O$. 
In particular, this implies that the expression for $\varphi$ given in the proposition is already normalized. 
Since $\Res(\Phi) = 1 - \lambda_1\lambda_2$, we see that if $\tilde{\lambda_1\lambda_2} \ne \tilde{1}$, 
then $|\Res(\Phi)| = |1 - \lambda_1\lambda_2| = 1$, so $\varphi$ has good reduction at $\zeta_G$, 
proving (A)(i).

Now suppose that $\tilde{\lambda}_1 = \tilde{\lambda}_2 = \tilde{\lambda}_3 = \tilde{1}$. 
This means that $|\lambda_1\lambda_2 - 1| < 1$.
Set $\rho := \sqrt{\lambda_1\lambda_2 - 1}$, and let $r  := |\rho| < 1$. Set $\gamma(z) := \rho z - 1$, so that $\gamma(\zetaG) = \zeta_{D(-1,r)}$. To prove (A)(ii), it suffices to show that $\varphi^{\gamma}$ has good reduction.

The map $\varphi^{\gamma}$ is given by
	\[ \varphi^{\gamma}(z) = \frac{\rho^2z^2 + \rho(\lambda_1 + \lambda_2 - 2)z - (\lambda_1 + \lambda_2 - 2)}{\rho^2\lambda_2 z - \rho(\lambda_2 - 1)}. \]
Since $\lambda_1,\lambda_2 \in \O$ by assumption, all of the coefficients of $\varphi^{\gamma}$ are integers. However, as we will now see, all of the coefficients of $\varphi^\gamma$ lie in $\m$, so we need a normalized representation of $\varphi^{\gamma}$.

Since $|\lambda_3| = 1$, it follows from \eqref{eq:lambda_3} that
	\[ |\lambda_1 + \lambda_2 - 2| = |\lambda_1\lambda_2 - 1| = |\rho^2| = r^2. \]
We also claim that $|\lambda_2 - 1| \le r$. Indeed, suppose to the contrary that $|\lambda_2 - 1| > r$. Then
	\[ |(\lambda_2 - 1)^2| > |\lambda_1\lambda_2 - 1| = |\lambda_1 + \lambda_2 - 2|, \]
so that
	\[ |(\lambda_2 - 1)^2| > |\lambda_1 + \lambda_2 - 2| = |\lambda_1\lambda_2 + \lambda_2^2 - 2\lambda_2| = |(\lambda_2 - 1)^2 + (\lambda_1\lambda_2 - 1)| = |(\lambda_2 - 1)^2|, \]
a contradiction.

We now see that the absolute values of the coefficients of $\varphi^{\gamma}$ are as follows:
	\begin{align*}
		|\rho^2| = |-(\lambda_1 + \lambda_2 - 2)| = |\rho^2\lambda_2| &= r^2; \\
		|\rho(\lambda_1 + \lambda_2 - 2)| &= r^3; \\
		|\rho(\lambda_2 - 1)| &\le r^2;
	\end{align*}
so the maximum among the absolute values of the coefficients is $r^2$. We therefore divide all coefficients by $\rho^2$ to obtain the normalized representation
	\[ \varphi^{\gamma}(z) = \frac{z^2 + \frac{\lambda_1 + \lambda_2 - 2}{\rho} z - \frac{\lambda_1 + \lambda_2 - 2}{\rho^2}}{\lambda_2 z - \frac{\lambda_2 - 1}{\rho}}. \]
The resultant of the natural lift $\Phi^{\gamma}$ is
	\[ \Res(\Phi^{\gamma}) \ = \  -\frac{\lambda_1\lambda_2 - 1}{\rho^2}\  = \ -1 \ , \]
and therefore $\varphi^{\gamma}$ has good reduction. It follows that $\gamma(\zetaG) = \zeta_{D(-1,r)}$ 
is a repelling fixed point  for $\varphi$, and hence $\xi = \zeta_{D(-1,r)}$, as claimed.

To prove (B), take $\gamma(z) := \lambda_1 z$, so that $\gamma(\zetaG) = \zeta_{D(0,|\lambda_1|)}$. Then
	\[ \varphi^{\gamma}(z) \ = \ \frac{z^2 + z}{\lambda_2 z + 1/\lambda_1}\ . \]
Since $|\lambda_1| > 1 \ge |\lambda_2|$, the obvious lift $\Phi^\gamma$ is normalized,  
and we can reduce modulo $\m$:
	\[ \tilde{\varphi^{\gamma}}(z) \ = \ \frac{z^2 + z}{\tilde{\lambda_2} z} \ = \ \frac{\tilde{1}}{\tilde{\lambda_2}} (z + \tilde{1}) \ . \]
	
If $\tilde{\lambda_2} \not \in \{\tilde{0},\tilde{1}\}$, then $\tilde{\varphi^{\gamma}}$ is conjugate to the map 
$(\tilde{1}/\tilde{\lambda_2})z$ via $z \mapsto z + \tilde{1}/(\tilde{\lambda_2} - \tilde{1})$, and therefore $\varphi^{\gamma}$ has 
multiplicative reduction. Moreover, observe that
	\[ |\alpha_3| = \frac{|\lambda_1 - 1|}{|\lambda_2 - 1|} = |\lambda_1|, \]
so $\zeta_{D(0,|\lambda_1|)} = \zeta_{D(0,|\alpha_3|)}$ is the branch point of $\GammaF$.
Thus $\zeta_{D(0,|\lambda_1|)}$ satisfies (W2), proving (B)(i).

If $\tilde{\lambda_2} = \tilde{1}$, then $\tilde{\varphi^{\gamma}} = z + \tilde{1}$, 
which shows that $\varphi^{\gamma}$ has additive reduction. Hence $\zeta_{D(0,|\lambda_1|)}$ satisfies (W3), 
proving (B)(ii).

Finally, if $\tilde{\lambda_2} = \tilde{0}$, then $\tilde{\varphi^{\gamma}}$ is the constant map $\tilde{\infty}$. 
This means that $\zeta_{D(0,|\lambda_1|)}$ is not a fixed point under $\varphi$. 
Arguing just as we did for (B)(i), we see that $\zeta_{D(0,|\lambda_1|)}$ is the branch point for $\GammaF$, 
so $\zeta_{D(0,|\lambda_1|)}$ satisfies (W4), completing the proof.
\end{proof}

\section{Applications to Moduli Space}\label{sect:modulispace}

\subsection{Proof of the Main Theorem}\label{sec:MainThmProof}

We are now ready to prove our main result, Theorem~\ref{thm:main}, which says that for quadratic maps the crucial set  
gives a stratification of $\M_2(K)$ compatible with specialization to $\overline{\M}_2(k)$.
Recall that $\M_2 \cong \AA^2$ and $\overline{\M}_2 \cong \PP^2$ as schemes over $\Z$;  
by abuse of notation we view the isomorphism  
$\textbf{s} = (\sigma_1,\sigma_2) : \M_2(K) \rightarrow \AA^2(K)$ 
as an embedding $$\textbf{s} = [\sigma_1: \sigma_2: 1]:\M_2(K) \hookrightarrow \mathbb{P}^2(K) \ .$$
Finally, for a point $P \in \P^2(K)$, we denote by $\widetilde{P} \in \P^2(k)$ the specialization of $P$ modulo $\m$.

\begin{main_thm} 
Let $K$ be a complete, algebraically closed non-Archimedean field.  
Let $\varphi$ be a degree two rational map over $K$, and let $\xi$ denote the unique point in the crucial set of $\varphi$.
Then
	\begin{enumerate}
		\item $\tilde{\s([\varphi])} \in \A^2(k)$ if and only if $\xi$ satisfies {\rm(W1)}.
		\item $\tilde{\s([\varphi])} = [\tilde{1} : \tilde{x} : \tilde{0}]$ 
                for some $\tilde{x} \in k$ with $\tilde{x} \ne \tilde{2}$ if and only if $\xi$ satisfies 
               {\rm(W2)}. In this case, $\tilde{x} = \tilde{\lambda} + \tilde{\lambda}^{-1}$, 
                   where $\lambda$ is the multiplier of an indifferent fixed point for $\varphi$.
		\item $\tilde{\s([\varphi])} = [\tilde{1} : \tilde{2} : \tilde{0}]$ if and only if $\xi$ satisfies {\rm(W3)}.
		\item $\tilde{\s([\varphi])} = [\tilde{0} : \tilde{1} : \tilde{0}]$ if and only if $\xi$ satisfies {\rm(W4)}.
	\end{enumerate}
\end{main_thm}

\begin{proof}[Proof of Theorem~\ref{thm:main}]
Because $\xi$ must satisfy exactly one of (W1) -- (W4), and since $\P^2(k)$ is equal to the disjoint union
	\[ \P^2(k) \ = \ \A^2(k) 
           \sqcup \left\{[\tilde{1} : \tilde{x} : \tilde{0}] \ | \ \tilde{x} \in k, \tilde{x} \ne \tilde{2} \right\} 
           \sqcup \{[\tilde{1} : \tilde{2} : \tilde{0}] \} 
           \sqcup \{[\tilde{0}: \tilde{1} : \tilde{0}]\}, \]
it suffices to prove only the forward implications of the statements in the theorem.

First, suppose $\xi$ satisfies (W1). If $\varphi$ has three distinct fixed points, 
it follows from Proposition~\ref{prop:main_prop} that $\varphi$ has no repelling fixed points. 
Thus all of the multipliers of $\varphi$ lie in $\O$. In particular, this means that $\sigma_1, \sigma_2 \in \O$, 
so
	\[ \tilde{\s([\varphi])} = [\tilde{\sigma_1} : \tilde{\sigma_2} : \tilde{1}] \in \A^2(k). \]
If, on the other hand, $\varphi$ has a multiple fixed point, then for a suitable ordering of the multipliers 
we have $\lambda_1 = \lambda_2 = 1$, and by Proposition~\ref{prop:ramifiedfixedpt} we have $|\lambda_3|\leq 1$. 
Once again, all the multipliers of $\varphi$ lie in $\mathcal{O}$, and so
	\[ \tilde{\s([\varphi])} = [\tilde{\sigma_1} : \tilde{\sigma_2} : \tilde{1}] \in \A^2(k). \]

For $\xi$ to satisfy (W2), (W3), or (W4), the map $\varphi$ must have at least one repelling classical fixed point 
and one non-repelling classical fixed point. Indeed, in the case that $\varphi$ has a multiple fixed point, 
this follows from Proposition~\ref{prop:ramifiedfixedpt}; in the case that $\varphi$ has three distinct fixed points, 
we know from Proposition~\ref{prop:main_prop} that $\varphi$ must have at least one classical repelling fixed point, 
in which case $\varphi$ also has a non-repelling fixed point by Lemma~\ref{lem:repelling}. 
Therefore, we may assume for the remainder of the proof that $|\lambda_1| > 1 \ge |\lambda_2|$.\\

\medskip
Suppose $\xi$ satisfies (W2). It follows from Proposition~\ref{prop:ramifiedfixedpt} that $\varphi$ 
must have three distinct fixed points, and from Proposition~\ref{prop:main_prop} 
we must have $\tilde{\lambda_2} \not \in \{\tilde{0},\tilde{1}\}$. Using the explicit formula for $\lambda_3$ 
in Lemma~\ref{lem:lambda_3}, together with the fact that $|\lambda_2| = 1$, we find that $|\lambda_3| = 1$. 
Therefore $(1/\lambda_1)\sigma_1$ and $(1/\lambda_1)\sigma_2$ lie in $\mathcal{O}$; 
reducing modulo $\m$ yields
	\begin{align*}
	\tilde{\left(\frac{\sigma_1}{\lambda_1}\right)} \ &= \ \tilde{1} + \tilde{\left(\frac{\lambda_2}{\lambda_1}\right)} + \tilde{\left(\frac{\lambda_3}{\lambda_1}\right)} \ = \ \tilde{1}\ ; \\
	\tilde{\left(\frac{\sigma_2}{\lambda_1}\right)} \ &= \ \tilde{\lambda_2} + \tilde{\lambda_3} + \tilde{\left(\frac{\lambda_2\lambda_3}{\lambda_1}\right)} \ = \ \tilde{\lambda_2} + \tilde{\lambda_3}\ .
	\end{align*}
Thus 
\[\widetilde{\textbf{s}([\varphi])} \ = \ [\tilde{1}: \tilde{\lambda_2} + \tilde{\lambda_3} : \tilde{0}]\ .\] 
Once again using the formula for $\lambda_3$ from Lemma~\ref{lem:lambda_3}, we have
	\[ \lambda_3 \ = \ \frac{1 + \lambda_2/\lambda_1 - 2/\lambda_1}{\lambda_2 - 1/\lambda_1}\ . \]
Reducing modulo $\m$, we therefore have $\tilde{\lambda_3} = \tilde{\lambda_2}^{-1}$. 
Letting $\tilde{x} = \tilde{\lambda_2} + \tilde{\lambda_2}^{-1}$ and noting that $\tilde{\lambda_2} \ne \tilde{1}$ 
implies $\tilde{x} \ne \tilde{2}$ completes the proof of (B). 

\medskip
Now suppose that $\xi$ satisfies (W3). In the case that $\varphi$ has a multiple fixed point, 
it follows from Proposition~\ref{prop:ramifiedfixedpt} and our assumption 
$|\lambda_1| > 1$ that $\lambda_2 = \lambda_3 = 1$. Thus $\widetilde{(\sigma_1/\lambda_1)} = \tilde{1}$ 
and $\widetilde{(\sigma_2/\lambda_1)} = \tilde{2}$. Similarly, if $\varphi$ has three distinct 
fixed points, then by Proposition~\ref{prop:main_prop} and Lemma~\ref{lem:lambda_3} we have $\tilde{\lambda_2} =\tilde{1}$ 
and $\tilde{\lambda_3} = \tilde{1/\lambda_2} = \tilde{1}$.
Here again we have $\widetilde{(\sigma_1/\lambda_1)} = \tilde{1}$ and $\widetilde{(\sigma_2/\lambda_1)} = \tilde{2}$.. 
Thus in every case, we have 

\[\widetilde{\textbf{s}([\varphi])} \ = \ [\tilde{1}: \tilde{2} : \tilde{0}]\] as asserted.

\medskip
Finally suppose that $\xi$ satisfies (W4). It follows from Proposition~\ref{prop:ramifiedfixedpt} that $\varphi$ 
must have three distinct fixed points, and from Proposition~\ref{prop:main_prop} we must have $\tilde{\lambda_2} = 0$. 
Since $\tilde{\lambda_2} = 0$, we have $|\lambda_2| < 1$, and therefore $|\lambda_3| > 1$ by Lemma~\ref{lem:repelling}. 
We now observe that
	\begin{align*}
		\left|\frac{\sigma_1}{\lambda_1\lambda_3}\right| = \left|\frac{1}{\lambda_3} + \frac{\lambda_2}{\lambda_1\lambda_3} + \frac{1}{\lambda_1}\right| &< 1;\\
		\left|\frac{\sigma_2}{\lambda_1\lambda_3}\right| = \left|\frac{\lambda_2}{\lambda_3} + 1 + \frac{\lambda_2}{\lambda_1}\right| &= 1.
	\end{align*}
Therefore
	\[ \tilde{\textbf{s}([\varphi])} = \left[ \tilde{\left(\frac{\sigma_1}{\lambda_1\lambda_3}\right)} : \tilde{\left(\frac{\sigma_2}{\lambda_1\lambda_2}\right)} : \tilde{\left(\frac{1}{\lambda_1\lambda_3}\right)}\right] = [\tilde{0} :\tilde{1} : \tilde{0}], \]
as claimed.
\end{proof}

\subsection{An Application to Repelling Periodic Points}\label{sec:hsia}

We now use the main theorem to prove a special case of a conjecture of Hsia. 
For a rational map $\varphi \in K(z)$, 
let $\mathcal{J}_\varphi(K)$ denote the (classical) Julia set of $\varphi$, 
let $\mathcal{R}_\varphi(K)$ denote the set of all classical repelling periodic points for $\varphi$, 
and let $\overline{\mathcal{R}_\varphi(K)}$ be its closure in $\PP^1(K)$. 

It is known over the complex numbers that $\mathcal{J}_\phi(\CC) = \overline{\mathcal{R}_\phi(\CC)}$; the analagous result is not known when $K$ is non-Archimedean, though it is conjectured to be true:

\begin{conj}[{Hsia, \cite[Conj. 4.3]{hsia:2000}}]
Let $\varphi$ be a rational function defined over a non-archimedean field with $\deg \varphi \ge 2$. Then $\mathcal{J}_\varphi(K) = \overline{\mathcal{R}_\varphi(K)}$.
\end{conj}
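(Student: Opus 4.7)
The forward inclusion $\overline{\mathcal{R}_\varphi(K)} \subseteq \mathcal{J}_\varphi(K)$ is standard: at any classical repelling periodic point $\alpha$ of period $n$ with multiplier $\lambda$, $|\lambda| > 1$ makes $\{\varphi^{kn}\}$ fail to be equicontinuous in any neighborhood of $\alpha$, so $\alpha \in \mathcal{J}_\varphi(K)$, and closedness of the Julia set extends the inclusion to the closure. My plan therefore focuses on the reverse containment $\mathcal{J}_\varphi(K) \subseteq \overline{\mathcal{R}_\varphi(K)}$, which I would attack by working in the Berkovich projective line and then descending back to classical (type I) points.

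The strategy combines three ingredients. First, by a theorem of Rivera-Letelier, the Berkovich type II repelling periodic points are dense in the Berkovich Julia set $\mathcal{J}_\varphi^{\mathrm{Berk}} \subseteq \pberk$, and one has the identification $\mathcal{J}_\varphi(K) = \mathcal{J}_\varphi^{\mathrm{Berk}} \cap \PP^1(K)$. Second, the non-archimedean Montel theorem (Hsia, Rivera-Letelier) ensures that for any classical disc $D = D(x_0,r)^-$ with $x_0 \in \mathcal{J}_\varphi(K)$, the iterates $\bigcup_n \varphi^n(D)$ cover all of $\PP^1(K)$ except a finite exceptional set. Third, at a Berkovich type II repelling periodic point $P$ of period $m$, the reduction $\tilde{\varphi^m}_P$ is a rational map of degree $\geq 2$ on $\PP^1(k)$, whose periodic orbits lift via Hensel's lemma to classical type I periodic orbits in the classical discs attached to the tangent directions at $P$, with the repelling or indifferent character dictated by the residual dynamics.

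Putting these together, I would argue as follows: given $x_0 \in \mathcal{J}_\varphi(K)$ and any neighborhood $D$ of it, Berkovich density supplies a type II repelling periodic point $P$ near $x_0$; lifting residual repelling orbits of $\tilde{\varphi^m}_P$ produces classical repelling periodic points $\alpha$ in classical discs adjacent to $P$; and then Montel (or an equidistribution argument) is used to bring such $\alpha$ actually into $D$. A more streamlined route is to invoke equidistribution directly: the type I periodic points of $\varphi^n$ equidistribute to the equilibrium measure $\mu_\varphi$ supported on $\mathcal{J}_\varphi^{\mathrm{Berk}}$ (Baker-Rumely, Favre-Rivera-Letelier), and since $\mu_\varphi(D) > 0$ there must be infinitely many classical periodic points in $D$, of which a positive proportion are necessarily repelling.

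The hard part is the lifting and counting step in positive residue characteristic. When $\Char(k) = p > 0$ divides the period $m$, Hensel's lemma can fail to lift a residual repelling periodic point to a repelling classical one (the lifted multiplier may slip into the indifferent range), and for small residue fields $k$ the map $\tilde{\varphi^m}_P$ need not have enough repelling periodic orbits of small period to seed the argument. The quadratic case proved in Proposition~\ref{prop:main} sidesteps these pathologies precisely because Theorem~\ref{thm:main} provides explicit normal forms under each of (W1)--(W4) in which the classical repelling periodic points can be exhibited by hand. Extending this plan to degree $d \geq 3$ would require either an inductive structural analysis of the Berkovich Fatou--Julia decomposition, or a quantitatively effective equidistribution statement guaranteeing that a positive fraction of $\mathrm{Per}_n(\varphi)$ is simultaneously type I, repelling, and located in every prescribed disc meeting $\mathcal{J}_\varphi^{\mathrm{Berk}}$ --- and either route would take the argument well beyond the crucial-set techniques developed here.
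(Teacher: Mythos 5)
Your text is a program rather than a proof, and the statement at issue --- Hsia's conjecture in full generality --- is not proved in the paper either; the paper only establishes the quadratic case (Proposition~\ref{prop:main}). As written, two of your steps genuinely fail. The equidistribution route cannot by itself produce repelling points: Favre--Rivera-Letelier equidistribution does give classical periodic points in any Berkovich neighborhood of a point of $\mathcal{J}_\varphi(K)$, but nothing forces ``a positive proportion'' of them to be repelling, because there is no non-archimedean analogue of the Fatou--Shishikura bound on nonrepelling cycles. For instance, a map with good reduction such as $z \mapsto z^2$ (with $\Char(k) \ne 2$) has infinitely many indifferent cycles and no repelling ones, so repelling character can never come from counting alone. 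Likewise, the Hensel lifting of residual repelling cycles at a type II repelling periodic point is exactly the step that breaks in residue characteristic $p$ when $p \mid m$ or when $k$ is too small, as you yourself acknowledge. So the reverse inclusion $\mathcal{J}_\varphi(K) \subseteq \overline{\mathcal{R}_\varphi(K)}$ is not established by your sketch, and the general conjecture remains open.

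For the case the paper does prove, its argument is very different from your plan and supplies the idea you are missing: B\'ezivin's theorem \cite{Bez}, which says that if $\varphi$ has at least one classical (type I) repelling periodic point, then already $\mathcal{J}_\varphi(K) = \overline{\mathcal{R}_\varphi(K)}$. This collapses the problem from ``produce repelling cycles near every Julia point'' to ``produce one repelling fixed point,'' which is where the crucial-set analysis enters. The paper splits into two cases: if $\varphi$ has potential good reduction, the Berkovich Julia set is a single point of $\hberk$, so $\mathcal{J}_\varphi(K) = \emptyset = \overline{\mathcal{R}_\varphi(K)}$; if $\varphi$ has bad reduction, Theorem~\ref{thm:main} shows $\s([\varphi])$ does not specialize into $\A^2(k)$, so the symmetric functions of the fixed-point multipliers are not all integral, hence some fixed point has multiplier of absolute value greater than $1$, i.e.\ $\varphi$ has a type I repelling fixed point, and B\'ezivin's theorem finishes. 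Because your proposal never invokes such a reduction, it must carry the full weight of constructing repelling cycles in every disc meeting the Julia set, and that is precisely where it stalls.
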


Using Theorem \ref{thm:main}, we show that Hsia's conjecture holds for a quadratic rational map  
over a complete, algebraically closed non-archimedean field. 

\begin{main_prop}
Let $\varphi$ be a quadratic rational map defined over $K$. 
Then $\mathcal{J}_\varphi(K) = \overline{\mathcal{R}_\varphi(K)}$.\end{main_prop}

\begin{proof}
We separate the proof based on whether $\varphi$ has potential good reduction or bad reduction. 

If $\varphi$ has potential good reduction, the Berkovich Julia set is a single point in $\hberk$ (see \cite{FRL} Proposition 0.1); 
thus $\varphi$ has no type I repelling periodic points (such would necessarily be Julia), 
hence $\mathcal{J}_\varphi(K)= \emptyset = \overline{\mathcal{R}_\varphi(K)}$ as desired.

If $\varphi$ has bad reduction, then by Theorem~\ref{thm:main} the image
 $\textbf{s}([\varphi]) \in \AA^2(K) \subset \PP^2(K)$ 
cannot specialize to $\mathbb{A}^2(k)$. 
It follows that $\varphi$ must have a type I repelling fixed point, for if all of its type I fixed points 
were non-repelling then the symmetric functions in their multipliers would lie in $\mathcal{O}_K$. 
By a theorem of B\'ezevin (\cite{Bez}, Th\'eor\`eme 3), 
this implies that $\mathcal{J}_\varphi(K) = \overline{\mathcal{R}_\varphi(K)}$. 
\end{proof}

\medskip

\end{document}